\documentclass[11pt,a4paper]{article}

\usepackage{amsmath,amsthm,amsopn,amsfonts,amssymb,dsfont,color}
\usepackage{psfrag}
\usepackage{geometry}
\usepackage{graphicx}
\usepackage[utf8]{inputenc}
\usepackage[english]{babel}
\usepackage[active]{srcltx}
\usepackage{subcaption}

\usepackage{footmisc}

\usepackage{a4,a4wide}
\usepackage{color}
\usepackage{enumerate}

\def\spt{{\rm spt}}
\def\R{\mathbb R}

\newtheorem{theo}{\textbf{Theorem}}[section]

\newtheorem{lem}{\textbf{Lemma}}[section]
\newtheorem{prop}[theo]{\textbf{Proposition}}

\newtheorem{defi}{\textbf{Definition}}[section]

\numberwithin{equation}{section}

\title{Terrace solutions for non-Lipschitz multistable nonlinearities}
\date{\today}
\author{Thomas Giletti \footnote{Institut Elie Cartan de Lorraine, UMR 7502, University of Lorraine, 54506 Vandoeuvre-les-Nancy, France
thomas.giletti@univ-lorraine.fr}, Ho-Youn Kim\footnote{Department of Mathematical Sciences, KAIST, 291 Daehak-ro, Yuseong-gu, Daejeon, 34141,
Korea.}, Yong-Jung Kim$^\dagger$ }

\begin{document}

\maketitle

\begin{abstract}
Traveling wave solutions of reaction-diffusion equations are well-studied for Lipschitz continuous monostable and bistable reaction functions. These special solutions play a key role in mathematical biology and in particular in the study of ecological invasions. However, if there are more than two stable steady states, the invasion phenomenon may become more intricate and involve intermediate steps, which leads one to consider not a single but a collection of traveling waves with ordered speeds. In this paper we show that, if the reaction function is discontinuous at the stable steady states, then such a collection of traveling waves exists and even provides a special solution which we call a terrace solution. More precisely, we will address both the existence and uniqueness of the terrace solution.
\end{abstract}
%%%%%%%%%%%%%%%%%%%%%%%%%%%%%%%%%%%%
%%%%%%%%%%%%%%%%%%%%%%%%%%%%%%%%%%%%
%\vspace{15pt}
%\tableofcontents
%\vspace{10pt}

\section{Introduction}

The solution of an initial value problem of ordinary differential equations (or ODEs for brevity),
\[
 U ' =F(t,U),\quad U(t_0)=U_0,\quad t\in\R,
\]
uniquely exists at least locally in time if the reaction function $F(t,U)$ is Lipschitz continuous in a neighborhood of $(t_0,U_0)$. This fundamental ODE theory is the foundations to obtain the uniqueness and the existence for many problems of partial differential equations (or PDEs), which is the reason why the Lipschitz continuity is mostly assumed. However, the Lipschitz continuity also gives some undesirable phenomena. For example, when a solution converges to a steady state, it does so only asymptotically and never arrives at it in a finite time. 

The purpose of this paper is to develop basic theories for the interaction of traveling wave solutions of a reaction-diffusion equation,
\begin{equation}\label{eq:rd}
\partial_t u = \partial_{xx} u + f(u),\quad  t >0, \ \  x\in\R,
\end{equation}
when the reaction function $f$ has several stable steady states and is discontinuous at them. Here by stable steady state we typically mean that it is asymptotically stable with respect to the ODE $u' = f(u)$. In ecology and population dynamics, the unknown function $u$ typically stands for a species density. Due to the several steady states, propagation phenomena may involve intermediate steps and some consecutive traveling wave solutions (see \cite{DGM,GM,Polacik} in the Lipschitz continuous case).

Discontinuities may come from harvesting terms and make the model more realistic, for instance by providing finite time extinction (see \cite{Kim2020}). Since $f$ is not Lipschitz continuous, the classical theory for the existence and the uniqueness fail. However, the non-Lipschitz reaction function $f$ does not cause too much trouble since it has discontinuities at stable steady states only. On the other hand, the discontinuities in $f$ produce a sharp interface of traveling wave solutions and make it possible to glue together traveling wave solutions. Furthermore, the asymptotic drift of a logarithmic scale between two consecutive waves, that one observes in the Lipschitz case, disappears.

We take three hypotheses for the reaction function $f$. First, we assume that there exists a finite number of steady states, $\theta_i$'s for $i=0,1,\cdots,2I$, such that\
\begin{eqnarray}
 && 1 = \theta_0 > \theta_1 > \cdots > \theta_{2I-2} > \theta_{2I-1} > \theta_{2I} = 0, \nonumber    \\
 &&  f(u)>0\ \text{ for }\ u<\theta_{2I}\ \ \text{ and }\ \theta_{2i+1}<u<\theta_{2i},\quad i=0,\cdots,I-1,  \label{H1}\\
&& f(u)<0\ \text{ for }\ u>\theta_{0}\ \text{ and }\ \theta_{2i}<u<\theta_{2i-1},\quad i=1,\cdots,I.\nonumber
\end{eqnarray}
A diagram of such $f(u)$ is described in Figure~\ref{fig1}~(a). We have chosen 0 and 1 as the extremal steady states for our convenience, which is always possible by rescaling the solution. Under the assumptions in \eqref{H1}, the $\theta_{2i}$'s are stable steady states for $i=0,\cdots,I$, and the $\theta_{2i-1}$'s are unstable ones for $i=1,\cdots,I$.

A solution $u(t,x)$ of~\eqref{eq:rd} is called a traveling wave solution connecting $1$ to $0$ if there exists a wave profile $\phi$ and a wave speed $c\in\R$ such that
\[
u(t,x)=\phi(x-ct),\   \text{ with } \  \phi(z)\to1\ \text{ as }\ z\to-\infty,\ \text{ and }\ \phi(z)\to0\ \text{ as }\ z\to\infty.
\]
Traveling wave solutions have been intensively studied when $f$ is Lipschitz continuous. In particular, if $I=1$, the nonlinearity is called bistable and there exists a unique traveling wave speed $c \in \R$, and a unique (up to translation) wave profile $\phi$. Furthermore, it is well-known that these traveling wave solutions describe the large-time dynamics of solutions of the Cauchy problem for large classes of initial data. In particular they are useful to understand a large range of propagation phenomena from physics, biology and population dynamics, which can be modeled by reaction-diffusion equations such as~\eqref{eq:rd}. We refer to the celebrated works~\cite{AW75,FifeMcLeod77} for more details.

If $I >1$, a traveling wave solution connecting $1$ to $0$ does not exist in general, and a so-called propagating terrace is considered instead. This notion refers to a collection of traveling waves that connect steady states sequentially from $1$ to $0$; we refer again to~\cite{FifeMcLeod77} where it is introduced under the different name of ``minimal decomposition'', to~\cite{Polacik} for further developments in the homogeneous case and to~\cite{DGM,GM} where propagating terraces have been studied in the context of spatially heterogeneous reaction-diffusion equations. An adaptation to a discontinuous case will be given below in Definition~\ref{terracedef}. Note that, while the propagating terrace still dictates the large-time behavior of solutions of the Cauchy problem, it does so only locally since it is not a single but a family of solutions of~\eqref{eq:rd}. Concerning this latter fact, in the discontinuous reaction framework we will obtain a single function that connects the external steady states $1$ and $0$ and provides the global picture of solution dynamics.

Next, we assume that $f$ has the regularity of
\begin{equation}\label{H2}
f\in C^1(\R\setminus\{\theta_{2i}\,| \ 0\leq i \leq I \})\cap {\rm Lip}\, (\R\setminus\{\theta_{2i}\,| \ 0\leq i \leq I \}).
\end{equation}
Notice that, unlike in the aforementioned works, we do not assume any regularity at the stable steady states. In fact, we assume $f$ has jumping discontinuities at the stable steady states such as
\begin{equation}\label{H3}
\lim_{u \to \theta_{2i}^-}f(u)>0>\lim_{u \to \theta_{2i}^+}f(u),\quad i=0,\cdots,I.
\end{equation}
As a matter of fact, it is such discontinuities that allow a ``terrace solution" of \eqref{eq:rd} when $f$ satisfies \eqref{H1}--\eqref{H3}.

We made some of the above assumptions to simplify the presentation. For example, instead of~\eqref{H2}, we can allow $f$ to have a finite number of discontinuities away from steady states without any incidence on the results. However, if $f$ is discontinuous at an unstable steady state $\theta_{2i-1}$, the solution of~\eqref{eq:rd} is not unique and one would face well-posedness issues. Moreover, in~\eqref{H3}, we assume that the left and the right side limits of $f$ at the stable steady states $\theta_{2i}$ are both nonzero. This is the key assumption of this paper, though one might actually extend our arguments to reaction functions which are only H\"older continuous at stable steady states but not Lipschitz continuous. If a Lipschitz continuous reaction function is given, one might consider the discontinuous nonlinearity in this paper as an approximation (see Figure~\ref{fig1}); yet as we mentioned above, a discontinuous reaction function exhibits more realistic phenomena in some aspects such as finite time extinction in population dynamics.\\

\begin{figure}[h]
	\begin{subfigure}{.62\textwidth}
		\centering
		\includegraphics[width=.8\linewidth]{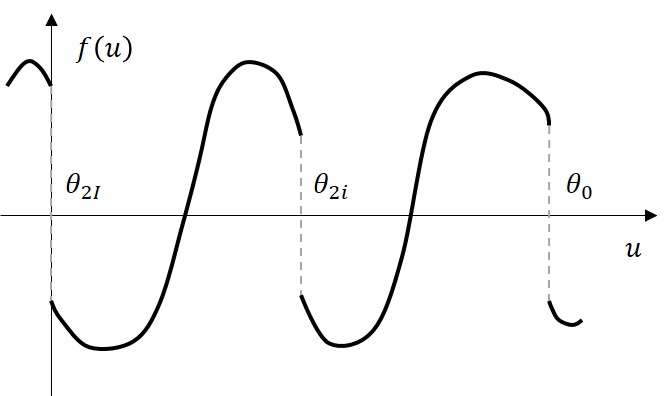}
%		\caption{fig 1-(a)}
		\label{fig1:sfig1}
	\end{subfigure}%
	\begin{subfigure}{.38\textwidth}
		\centering
		\includegraphics[width=.8\linewidth]{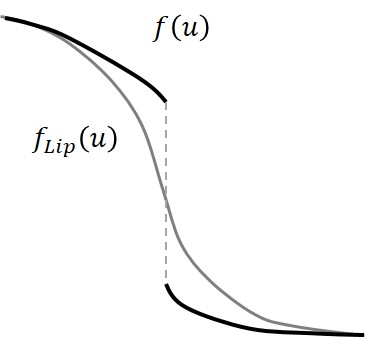}
%		\caption{fig 1-(b)}
		\label{fig1:sfig2}
	\end{subfigure}
	\caption{(a) An example of a non-Lipschitz multistable reaction function with $I=2$. (b) Any Lipschitz continuous reaction function can be approximated by discontinuous reaction functions. }
	\label{fig1}
\end{figure}

Under the three assumptions \eqref{H1}--\eqref{H3}, we will prove the existence and uniqueness (up to some shifts) of a terrace solution. Roughly speaking, a terrace solution is a special solution in which the whole profile is separated by plateaus into several sub-profiles and each sub-profile moves at its own speed; see Definitions~\ref{terracedef} and~\ref{terracesolution}. By analogy with the case of a smooth reaction~\cite{DGM,FifeMcLeod77}, we also expect that these terrace solutions appear in the long-time asymptotics of solutions of the Cauchy problem; this will be the subject of a future work.

\section{Definitions and main results}

In this section, we introduce key definitions, basic properties together, and our main results. 
In particular, we define some special solutions, namely traveling waves and terraces.
\begin{defi}\label{def:twsol}
	A $C^1$ function $\phi:\R\to\R$ is called a {\bf traveling wave solution} of \eqref{eq:rd} if there exists $c\in\R$ such that
\begin{equation}\label{TWeqn}
\phi''+c\phi'+f(\phi)=0
\end{equation}
in the classical sense in the domain $\{z\in\R:\phi(z)\ne\theta_{2i},\  0 \leq i \leq I\}$. We call the constant $c$ a {\bf traveling wave speed}. Furthermore, we say that a traveling wave solution $\phi$ monotonically connects two steady states $\theta_i$ and $\theta_j$ with integers $0 \leq i < j \leq 2I$ if it is a decreasing function such that
\begin{equation*}\label{limits0}
\lim_{z \to-\infty}\phi(z)=\theta_i , \quad  \lim_{z\to\infty}\phi(z)=\theta_j.
\end{equation*}
\end{defi}
One may check that, if $\phi$ is a traveling wave solution, then $u(t,x)=\phi(x-ct)$ is a weak solution of \eqref{eq:rd}; see \cite{Kim2020} for details for the solution notion with the discontinuous reaction of the paper. We also use the following definition for the solution of \eqref{eq:rd}:
\begin{defi}\label{def:sol}
	A function $u(t,x)$ is called a {\bf solution} of \eqref{eq:rd} if it is $C^0$ with respect to~$t$, $C^1$ with respect to $x$, and satisfies \eqref{eq:rd} in the classical sense in the domain $$\{(t,x)\in\R^+\times\R : \ u(t,x) \ne \theta_{2i}, \ 0 \leq i \leq I.\}.$$
\end{defi}
We call both $\phi(z)$ and $u(t,x)=\phi(x-ct)$ traveling wave solutions.

\begin{defi}\label{connectedcompact} The support of a function $\psi:\R\to\R$ is the set of points where $\psi$ is nonzero, that is
\begin{equation}\label{support}
\text{spt} \, (\psi )= \{ z \in \mathbb{R} \, | \ \psi (z) \neq 0 \}.
\end{equation}
(Note that we do not use closed support.) A traveling wave solution $\phi$ is called: $(i)$ {\bf connected} if the support of $\phi'$ is connected; $(ii)$ {\bf compact} if the closure of the support of $\phi'$ is compact.
\end{defi}

If $f$ satisfies \eqref{H1} and \eqref{H2} only and is Lipschitz continuous, then every monotone and nontrivial traveling wave solution $\phi$ is connected and $\spt(\phi')=\R$. Hence, Definition \ref{connectedcompact} is mostly meant for the case with the discontinuity hypothesis~\eqref{H3}. In the bistable case $I=1$, the existence and uniqueness of a traveling wave solution that monotonically connects 1 and~0 has been addressed by one of the authors in \cite{KimChung}.

\begin{lem}[Chung and Kim \cite{KimChung}] Suppose that $f$ satisfies \eqref{H1}--\eqref{H3} with $I=1$. Then, a traveling wave solution of \eqref{eq:rd} that monotonically connects 1 and 0 exists and is unique up to a translation. Furthermore, this traveling wave solution is connected and compact.
\end{lem}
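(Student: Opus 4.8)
The plan is to reduce the statement to a two-point boundary value problem on a finite interval and then to solve that problem by a phase-plane shooting argument in the wave speed $c$.

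First I would exploit the discontinuity hypothesis \eqref{H3} to pin down the global shape of any monotone wave. Suppose $\phi$ is decreasing with $\phi(-\infty)=1$ and $\phi(+\infty)=0$; since $\phi$ is decreasing we have $\phi\le 1$. If $\phi$ approached $1$ only asymptotically, then along a sequence $z\to-\infty$ on which $\phi'\to 0$ we would get, from \eqref{TWeqn}, $\phi''\to -\lim_{u\to 1^-}f(u)<0$, so that $\phi''\le-\delta<0$ near $-\infty$; integrating forces $\phi'(z)\to+\infty$ as $z\to-\infty$, a contradiction. Hence $\phi$ attains the value $1$ at a finite point $z_0$, and being decreasing with $\phi\le 1$ it satisfies $\phi\equiv 1$ on $(-\infty,z_0]$ with $\phi'(z_0)=0$ by $C^1$ regularity. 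The symmetric argument at the lower state, now using $\lim_{u\to 0^+}f(u)<0$, produces a finite $z_1$ with $\phi(z_1)=0$, $\phi'(z_1)=0$ and $\phi\equiv 0$ on $[z_1,\infty)$. This already establishes compactness, and reduces the question to constructing a strictly decreasing solution of \eqref{TWeqn} on $[z_0,z_1]$ with the data $\phi(z_0)=1,\ \phi'(z_0)=0,\ \phi(z_1)=0,\ \phi'(z_1)=0$.

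Next I would set up the shooting, normalizing $z_0=0$ by translation. The crucial point is that, for each $c$, the detaching problem $\phi(0)=1,\ \phi'(0)=0$ is uniquely solvable: although \eqref{TWeqn} is singular at $\phi=1$, the restriction of $f$ to $(\theta_1,1)$ extends to a Lipschitz function with value $\lim_{u\to1^-}f(u)$ at $1$, so Picard--Lindel\"of applies and yields a unique $\phi_c$ with $\phi_c''(0)=-\lim_{u\to1^-}f(u)<0$; thus $\phi_c$ leaves $1$ and decreases. Writing $p=\phi'$ and $F(\phi)=\int_0^\phi f$, the orbit is $p=p_c(\phi)\le 0$ solving $\frac{dp}{d\phi}=-c-\frac{f(\phi)}{p}$, with the $c$-independent leading behavior $p_c(\phi)\sim-\sqrt{2\,(F(1)-F(\phi))}$ near $\phi=1$. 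Following this orbit downward, exactly one of two things occurs: either $p_c$ returns to $0$ at some height $\alpha_c>0$ (the wave turns around), or the orbit reaches $\phi=0$ with $p_c(0)<0$ (it overshoots). I would tune $c$ so that the orbit reaches the corner $(0,0)$ exactly, and then verify, exactly as in the first step but now invoking $\lim_{u\to0^+}f(u)<0$, that such an orbit attaches to $0$ at a finite $z_1$ with $\phi'(z_1)=0$. The heart of the matter is a strict monotonicity of the orbit in $c$: I would show $p_c(\phi)$ is strictly increasing in $c$ at each fixed height $\phi$. This follows from a comparison on $\frac{dp}{d\phi}$---if $p_{c_1}=p_{c_2}$ at some height with $c_1<c_2$, the $f/p$ terms cancel and the difference strictly increases in the direction of decreasing $\phi$---reinforced by the energy identity $\frac{d}{dz}\bigl(\tfrac12 p^2+F(\phi)\bigr)=-c\,p^2$, which says more damping dissipates more of $E=\tfrac12 p^2+F(\phi)$. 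Consequently $\alpha_c$ and $p_c(0)$ are both increasing in $c$, so the threshold speed $c^*$ separating the two regimes is unique; its existence follows by continuity together with the extreme behaviors (strong damping forces an early turn-around as $c\to+\infty$, while the orbit overshoots for $c\to-\infty$). At $c=c^*$ the orbit runs from $(1,0)$ to $(0,0)$ while staying in $\{p<0\}$ in between, which is precisely the connected, compact, strictly decreasing profile sought; uniqueness up to translation then follows because the first step forces every monotone wave to be of this form, and for each $c$ the detaching orbit is unique.

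The step I expect to be most delicate is controlling the passage through the interior unstable state $\theta_1$, which is a genuine Lipschitz saddle of the phase-plane system: I must ensure that the threshold orbit crosses $\{\phi=\theta_1\}$ transversally at $p<0$, rather than being captured along the stable manifold of $(\theta_1,0)$, which would yield only a wave connecting $1$ to $\theta_1$. The monotonicity in $c$ is what resolves this---capture occurs at an isolated speed $c_1^*$, and the connecting speed satisfies $c^*<c_1^*$---but making the comparison rigorous across the sign change of $f$ at $\theta_1$ and near the singular detachment point $p=0$ is where the real work lies.
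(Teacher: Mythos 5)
Your proposal follows essentially the same route as the paper: the lemma itself is quoted from Chung--Kim, but the paper's general construction in Section~\ref{sec:exist} (Proposition~\ref{prop:trajectory}, Theorems~\ref{monotonicity}--\ref{th:upper_wave}) is exactly this phase-plane shooting in $c$, with monotonicity and continuity of the trajectory $q(p;c)$, the two extreme-speed lemmas, and compactness deduced from the jump of $f$ at the stable states. The only caveats are that your first-crossing comparison must be replaced by an integrating-factor argument near the singular detachment point $p=0$ (where both orbits coincide and $f/p$ blows up, so the ``cancellation'' is $\infty-\infty$), and that capture at $\theta_1$ in fact occurs for the whole half-line $c\ge C_2$ rather than at an isolated speed, so ruling it out at $c^*$ requires the node/spiral perturbation argument of Theorem~\ref{th:upper_wave} --- but neither point changes the overall strategy, which matches the paper's.
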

On the other hand, if there are other stable steady states, such a traveling wave connecting directly 1 and 0 may or may not exist. The notion of a propagating terrace is considered specifically to handle such a situation.
\begin{defi} \label{terracedef} A collection of connected traveling wave solutions $\{\phi_j:j=1,\cdots,J\}$ is called a {\bf propagating terrace} connecting $1$ and $0$ if each $\phi_j$ monotonically connects two steady states $\theta_{i_{j-1}}$ and $\theta_{i_{j}}$, and these limits and the wave speeds $c_j$ corresponding to $\phi_j$ satisfy
\begin{equation*}\label{2.2}
1=\theta_{i_0}>\theta_{i_1}>\theta_{i_2}>\cdots>\theta_{i_J}=0
\quad\text{and} \quad c_1\le\cdots\le c_J.
\end{equation*}
The steady states $\theta_{i_j}$'s are called the {\bf platforms} of the terrace.
\end{defi}
Note that if two traveling waves of the terrace are compact and have the same speed, that is $c_i = c_{i+1}$ for some $i$, one may take the two traveling fronts $\phi_i$ and $\phi_{i+1}$ as a single traveling front and the propagating terrace would not be unique. However, we have imposed that the traveling waves in a terrace are connected with the support defined by \eqref{support}. This implies that we consider all traveling fronts with a same speed as separated traveling fronts in Definition \ref{terracedef}.

As we will prove below, the left and right limits of all traveling waves constituting a propagating terrace connecting $1$ and $0$ must be stable steady states, and due to the discontinuities of $f$ it will follow that these traveling waves are compact in the sense of Definition~\ref{connectedcompact}. In particular, these can be ``glued'' into a single special solution of \eqref{eq:rd}, which leads to the next definition.

\begin{defi} \label{terracesolution}
Let $f$ satisfy \eqref{H1}--\eqref{H3}, $\{\phi_j:j=1,\cdots,J\}$ be a propagating terrace, and $c_j$'s be the corresponding wave speeds. For given translation variables $\xi\in\R^J$, the summation,
\begin{equation*}\label{Phi}
\Phi(t,x; \xi):=\sum_{1 \leq j \leq J} (\phi_j (x - \xi_j - c_j t) - \theta_{i_{j}}),
\end{equation*}
is called a {\bf terrace function}. If moreover $\Phi (\cdot,\cdot; \xi)$ solves \eqref{eq:rd} (possibly only for $t>T$ with $T>0$), then we call it a \textbf{terrace solution}.
\end{defi}
\begin{figure}[h]
	
	\centering
	\includegraphics[width=.6\linewidth]{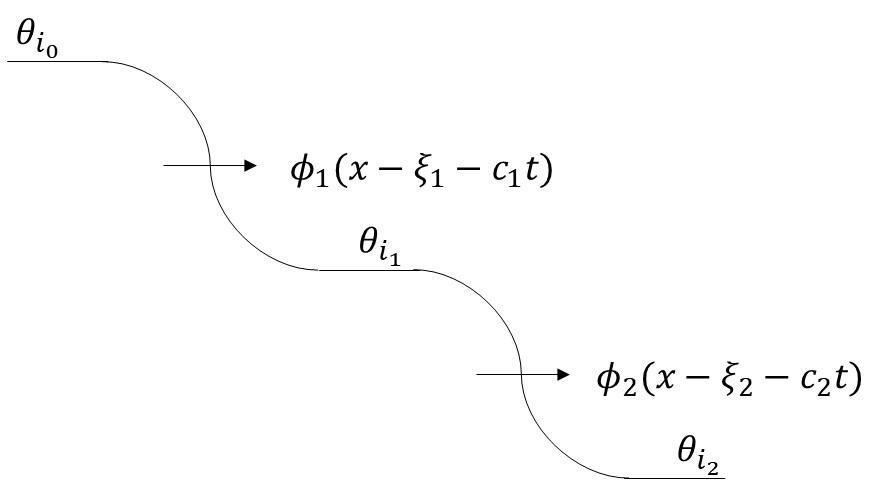}
	%		\caption{fig 1-(a)}
	\caption{An example of a terrace function with two fronts.}
	\label{fig3}
\end{figure}

We refer to Figure~\ref{fig3} for an illustration of a terrace function involving two compact and connected traveling wave solutions.

If \eqref{H3} fails and $f$ is Lipschitz-continuous, and if $J \geq 2$, then the terrace function is not a solution for any $T>0$ and $\xi\in\R^J$. On the other hand, if all $\phi_j$'s in the terrace are compact, then one can always find shifts $\xi \in \R^J$ such that the terrace function is a solution. Hence, the existence of the terrace solution is the unique property obtained by breaking the Lipschitz continuity. \\

Let us now turn to the statement of the main results of the paper. The main theorem concerns the existence and uniqueness of the propagating terrace.

\begin{theo}[Existence and uniqueness of a terrace]\label{th:exist} Let $f$ satisfy \eqref{H1}--\eqref{H3}. There exists a propagating terrace connecting $1$ and $0$. It is unique in the sense that the set of platforms $\{ \theta_{i_j}\}$ is unique, and for each $j$ the traveling wave solution $\phi_j$ is unique up to translation.

Furthermore, the propagating terrace satisfies the following properties:
\begin{enumerate}[$(i)$]
\item the $\theta_{i_j}$'s are stable steady states, i.e. $i_j$'s are even integers, for all $j=0,\cdots,J$;
\item the traveling waves $\phi_j$ are monotone, connected, and compact for $j=1,\cdots,J$ in the sense of Definition~\ref{connectedcompact}.
\end{enumerate}
In particular, in the bistable case when $I=1$, the propagating terrace consists of a single traveling wave connecting $1$ and $0$.
\end{theo}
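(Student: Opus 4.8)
The plan is to build the terrace out of bistable ``blocks'' and then stack them in order of increasing speed. First I would apply the bistable result of Chung and Kim \cite{KimChung} to each interval $[\theta_{2i+2},\theta_{2i}]$: after an affine rescaling to $[0,1]$, the restriction of $f$ there is a nonlinearity satisfying \eqref{H1}--\eqref{H3} with $I=1$ (its unique interior steady state being the unstable $\theta_{2i+1}$), so there is a monotone traveling wave $\psi_i$ connecting $\theta_{2i}$ to $\theta_{2i+2}$, unique up to translation, with a well-defined speed $s_i$, and it is connected and compact. This yields $I$ candidate fronts with speeds $s_0,\dots,s_{I-1}$. If these already satisfy $s_0\le\cdots\le s_{I-1}$, then $\{\psi_i\}$ is a propagating terrace in the sense of Definition~\ref{terracedef} and existence is settled with $J=I$ and all stable states as platforms.

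The main obstacle is the case where consecutive speeds are out of order. If $s_i>s_{i+1}$, the two fronts cannot coexist in a terrace and must be replaced by a single monotone front connecting $\theta_{2i}$ to $\theta_{2i+4}$ that crosses the intermediate stable state $\theta_{2i+2}$ transversally. I would construct such a front by a shooting argument in the phase plane $(\phi,\phi')$: for each speed $c$ the trajectory leaving $(\theta_{2i},0)$ into the region $\phi'<0$ is determined (its local behaviour at the corner being fixed by $f(\theta_{2i}^-)>0$ from \eqref{H3}), and I would track, as $c$ ranges over $(s_{i+1},s_i)$, whether it reaches $(\theta_{2i+4},0)$ after passing $\phi=\theta_{2i+2}$ with $\phi'<0$; an intermediate-value and monotonicity argument in $c$ then selects a connecting speed $c^\ast\in(s_{i+1},s_i)$. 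Iterating this merging whenever a violation of the ordering remains produces, after finitely many steps, a family of monotone fronts with nondecreasing speeds connecting a decreasing chain of stable states from $1$ to $0$, which is the terrace. Showing that the merged speed lands strictly between the two block speeds and that the merged front stays monotone is the delicate point, and it is where the discontinuity \eqref{H3} is used most heavily (it also guarantees that the merged front is compact and does not stall at $\theta_{2i+2}$).

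For the qualitative properties I would argue as follows. For (i), I multiply \eqref{TWeqn} by $\phi'$ to obtain $\frac{d}{dz}[\tfrac12(\phi')^2+F(\phi)]=-c(\phi')^2$ with $F'=f$. If a platform $\theta_{i_j}$ were an unstable state $\theta_{2m-1}$, then the front $\phi_j$ ending at it approaches from above, where $f>0$, and integrating the identity forces $c_j>0$; symmetrically the front $\phi_{j+1}$ leaving it approaches from below, where $f<0$, forcing $c_{j+1}<0$. This contradicts the ordering $c_j\le c_{j+1}$, so every platform must be stable. For (ii), compactness follows from \eqref{H3}: at a stable endpoint the relevant one-sided limit of $f$ is nonzero, so \eqref{TWeqn} gives $\phi''$ bounded away from $0$ as $\phi'\to0$; hence $\phi'$ must vanish at a finite $z$ rather than decay asymptotically, producing a plateau and a compactly supported $\phi'$, in contrast with the Lipschitz case where the support is all of $\R$. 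Monotonicity and connectedness are required in Definition~\ref{terracedef} and are preserved by the construction, since each merged front crosses the intermediate stable states with $\phi'<0$ and never stalls.

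Finally, for uniqueness I would first note that each connecting front is unique up to translation: for a block this is the Chung--Kim statement, and for a merged front it follows from the uniqueness of the connecting speed and of the corresponding phase-plane trajectory. Uniqueness of the platform set then follows from a rigidity argument on the ordering: proceeding from the top, the first platform together with $c_1$ is characterized intrinsically as the smallest speed of a monotone front issuing from $1$ (and the lowest stable state reachable at that speed), which forces any two terraces to share $(\theta_{i_1},c_1,\phi_1)$, and I conclude by induction on the remaining chain. The bistable case $I=1$ is immediate, since then there is a single block and Chung--Kim already provides the unique front connecting $1$ and $0$.
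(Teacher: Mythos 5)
Your decomposition is genuinely different from the paper's: you build bistable blocks on each $[\theta_{2i+2},\theta_{2i}]$ via Chung--Kim and then merge adjacent blocks whose speeds are out of order, whereas the paper works top-down, constructing for each speed $c$ the phase-plane trajectory issued from $(p_u,0)$ and selecting the critical speed $c^*=\sup\{c : p_l(c)=0,\ q(p_l(c);c)<0\}$ at each stage. The problem is that the step you yourself flag as ``the delicate point'' --- the existence of a merged monotone front with speed $c^*\in(s_{i+1},s_i)$ landing exactly at $(\theta_{2i+4},0)$ --- is precisely where all the work lies, and your ``intermediate-value and monotonicity argument'' does not go through as stated. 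The landing point $p_l(c)$ of the shooting trajectory is \emph{not} continuous in $c$: it is right-continuous but can jump as $c$ increases through a critical value (this is the content of the paper's Theorem~\ref{th:beyond}), so a naive IVT in $c$ fails. At the critical speed the trajectory could a priori terminate at the unstable zero $\theta_{2i+3}$ (excluded in the paper only by a separate argument showing $c^*>0$ and that the equilibrium is then a stable node/spiral, so that nearby speeds would also give connecting orbits, contradicting criticality), or at a stable state strictly below $\theta_{2i+4}$, or --- in the limit $c\uparrow c^*$ --- split into \emph{several} stacked fronts with the same speed. None of these scenarios is addressed, and the last one is why the terrace may legitimately contain consecutive fronts of equal speed. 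Without this analysis you have not established existence.

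The uniqueness sketch has a second gap. Your ``rigidity'' claim that $(\theta_{i_1},c_1,\phi_1)$ is forced only shows (as in the paper's Proposition~\ref{uniqueness1}) that every platform of the constructed terrace is a platform of any other terrace; it does not exclude a competing terrace whose first front stops at a \emph{higher} stable state $\widehat p_1>p_1$ with a \emph{larger} speed $\widehat c_1>c_1$, i.e.\ a terrace with strictly more platforms. Ruling this out is the content of the paper's Proposition~\ref{uniqueness2} and requires a separate integral comparison of the two trajectories on a small interval $(p_1,p_1+\epsilon)$ where $f<0$, which has no counterpart in your argument. Two smaller points: your energy identity for property $(i)$ must be applied \emph{locally} near the unstable endpoint (integrating $\tfrac{d}{dz}[\tfrac12(\phi')^2+F(\phi)]=-c(\phi')^2$ over the whole line only gives the sign of $\int f$ over the full range, not $c_j>0$); and property $(i)$ is not merely an a posteriori feature of terraces but must be verified for the landing points produced by your merging step, which loops back to the first gap.
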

It follows from Theorem~\ref{th:exist}, in particular the property $(ii)$, that there exist terrace solutions of~\eqref{eq:rd} in the sense of Definition~\ref{terracesolution}. We conjecture that the solution of the Cauchy problem converges, for a large class of initial data, to such a terrace solution. This will be adressed in a future work~\cite{GK}, and it would be a new result even in the bistable case where the terrace contains a single traveling wave. This would be consistent with the case of a smooth reaction function~\cite{DGM,FifeMcLeod77,GM}, where some piecewise (or local) convergence of the solution toward each of the traveling waves of the propagating terrace was shown. However, the novelty of our approach using a non-Lipschitz reaction function is that the global dynamics of a solution is dictated by a single terrace solution, which is a special solution of \eqref{eq:rd}.

\paragraph{Plan of the paper:} In Section~\ref{sec:exist}, we prove the existence part of Theorem~\ref{th:exist} using a phase plane analysis, which has to be done with extra caution to handle the discontinuities of the reaction function. Moreover, we will use an iterative method to construct all the traveling waves of the propagating terrace, starting from the uppermost one. Then, Section~\ref{sec:unique} is devoted to the uniqueness of the terrace (up to some shifts) and completes the proof of Theorem~\ref{th:exist}.

\section{Existence of terrace}\label{sec:exist}

In this section we address the existence of terrace in the multistable case under assumptions \eqref{H1}--\eqref{H3}. In the usual Lipschitz continuous case, there are several proofs for the existence of traveling waves, drawing on either phase plane analysis~\cite{AW75,FifeMcLeod77}, dynamical systems~\cite{FangZhao,GilettiRossi,Weinberger}, or intersection number arguments~\cite{DGM,KPP}. Due to the spatial homogeneity of \eqref{eq:rd} and in spite of the discontinuities in the reaction function, we adopt the former approach of phase plane analysis.

A traveling wave solution with speed $c$ satisfies
\begin{equation}\label{eq:ode}
p '' + c p' + f(p) =0
\end{equation}
together with some appropriate asymptotics as $x\to\pm\infty$, where $c$ is also an unknown. It is convenient to rewrite the second order equation as a first-order ODE system:
\begin{eqnarray}
\label{phaseplane1} p' &=& q, \\ \label{phaseplane2} q' &=& -cq-f(p).
\end{eqnarray}
This system immediately shows that the discontinuity in $f$ does not break the well-posedness of the problem \eqref{eq:ode} as long as $q\ne0$ when $p=\theta_{2i}$, the discontinuity point of $f$ (the solution can easily be extended by $C^1$-regularity). Yet, as a consequence, solutions of the ODE system \eqref{phaseplane1}--\eqref{phaseplane2} are continuous and piecewise $C^1$.

Beforehand, we introduce
$$M: = \| f \|_\infty<\infty.$$
Let $p_u$ be a positive stable steady state, i.e.
\[
p_u \in \{  \theta_{2i} \ : \ i \in \{ 1,\cdots,I\} \}.
\]
If the reaction function is Lipschitz continuous, one can connect two steady states only asymptotically, but one cannot take a steady state as an initial value to obtain a nontrivial solution. Hence, it is impossible to connect more than two steady states even asymptotically. One of the advantages of taking a non-Lipschitz reaction function is that one can take a steady state as an initial value and use the phase plane analysis technique explicitly as one can see in the followings.

For any $c \in \R$, there is a solution of~\eqref{eq:ode} which is identical to $p_u$ on the left half line and smaller than $p_u$ on an interval in the right half line. It is simply obtained by solving the problem
\begin{equation}\label{eq3.4}
  \tilde{p}'' + c \tilde{p}' + \tilde{f} (\tilde p) =0,\quad
  \tilde{p}(0 ) = p_u, \quad \tilde{p}' (0) = 0,
\end{equation}
where
\[
\tilde{f}(p)=\left\{\begin{array}{cc}
               f(p), & p<p_u, \\
               \lim_{s\to p_u^-} f(s), & p\ge p_u.
             \end{array}
\right.
\]
Notice that $\tilde f$ is Lipschitz continuous in a neighborhood of $p_u$, $\tilde{p}'' (0) < 0$, and hence $\tilde{p}'(\epsilon)<0$ for small $\epsilon>0$. The positive solution of \eqref{eq3.4} may exist as long as $\tilde{p}'<0$ and $p>0$. Then, the maximal point of the solution domain is given by
\[
X_c = \sup\,  \{ x >0 \, | \ \tilde{p} ' (x) < 0 \  \mbox{ and } \ \tilde{p} (x) >0  \} \in (0,\infty].
\]
Since $f(p)=\tilde f(p)$ for all $p<p_u$ and the solution satisfies $p(x)<p_u$ for all $x\in(0,X_c)$,  it is a positive decreasing solution of \eqref{eq:ode}, or equivalently of \eqref{phaseplane1}--\eqref{phaseplane2}, on the interval $(-\infty,X_c)$.

Moreover, since the solution $p(x)$ decreases strictly on $(0,X_c)$, we may consider $q$ as a function of $p \in (p_l,p_u)$ with $p_l := \lim_{x \to X_c} p (x)$ and $q<0$ on $(p_l,p_u)$. Then, we have
$$
{dq \over dp} = -c-{f(p) \over q} \quad \mbox{ on } \ (p_l,p_u),
$$
together with $q (p_u) = 0$, and either $p_l = 0$ or $q (p_l) = 0$. As explained above, the solution is understood as continuous and piecewise $C^1$ function due to the discontinuities of $f$ at the stable steady states.

We sum up the above in the following definition and proposition:
\begin{defi}\label{defi:trajectory} Let $f$ satisfy \eqref{H1}--\eqref{H3}. We say that $q=q(p)$ solves
\begin{equation}\label{eq:trajectory3}
{dq \over dp} = -c-{f(p) \over q} ,
\end{equation}
on an open interval $(a,b)$ if it is negative on $(a,b)$, continuous on the closed interval $[a,b]$, and satisfies \eqref{eq:trajectory3} in the classical sense except at the discontinuity points of $f$.
\end{defi}

\begin{prop}\label{prop:trajectory} Let $f$ satisfy \eqref{H1}--\eqref{H3} and $p_u \in \{ \theta_{2i} \}_{i=1, \cdots, I}$. For any $c \in \R$, there exists a unique $p_l \in [0,p_u)$ and a unique function $q \in C^0 ([p_l,p_u])$ that solves \eqref{eq:trajectory3}, $q(p_u)=0$,
\begin{equation}\label{eqn3.6}
  q(p ) < 0 \quad\text{ for all }\quad p\in (p_l,p_u),
\end{equation}
$q (p_l) = 0$ if $p_l >0$, and $q(p_l)\le 0$ if $p_l = 0$.
\end{prop}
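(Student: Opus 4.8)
The plan is to establish Proposition~\ref{prop:trajectory} by combining the phase-plane construction already sketched in the text with a careful existence/uniqueness argument for the trajectory $q(p)$, treating separately the regions where $f$ is Lipschitz and the single discontinuity point $p=p_u$. The key observation is that the solution $\tilde p$ of the truncated problem \eqref{eq3.4} is well-defined by standard ODE theory (since $\tilde f$ is Lipschitz near $p_u$), so the existence of a decreasing solution on $(0,X_c)$ is already in hand. What remains is to translate this into the statement about $q$ as a function of $p$, to identify $p_l$, and to prove that $q(p)$ is \emph{unique}.

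**First I would** set up the correspondence between the solution $\tilde p(x)$ and the trajectory $q(p)$. Since $\tilde p$ is strictly decreasing on $(0,X_c)$, it has a well-defined inverse, and setting $q(p) = \tilde p'(\tilde p^{-1}(p))$ gives a negative function on $(p_l, p_u)$ satisfying \eqref{eq:trajectory3} wherever $f$ is continuous, by the chain rule. I would define $p_l := \lim_{x\to X_c} \tilde p(x)$, which exists in $[0,p_u)$ since $\tilde p$ is decreasing and bounded below by the stopping condition. To show $q$ extends continuously to the closed interval $[p_l,p_u]$ with $q(p_u)=0$, I would use the initial condition $\tilde p'(0)=0$ at the upper endpoint; for the lower endpoint, I would argue by cases on the definition of $X_c$: either the solution reaches $p=0$ (so $p_l=0$ with possibly $q(p_l)\le 0$), or $\tilde p'$ reaches $0$ at a finite $X_c$ (giving $q(p_l)=0$), handling the dichotomy stated in the proposition. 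The boundedness of $q$ follows from the energy estimate obtained by multiplying \eqref{eq:ode} by $\tilde p'$ and using $M=\|f\|_\infty<\infty$, which prevents $q$ from blowing up and ensures the limit at $p_l$ is finite.

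**The main obstacle will be** the uniqueness of the pair $(p_l,q)$, together with the subtle behavior at the discontinuity point $p_u$ where $f$ jumps. Near $p=p_u$ the ODE \eqref{eq:trajectory3} is singular because $q(p_u)=0$, so the right-hand side $-c - f(p)/q$ is not Lipschitz in $q$ and standard uniqueness does not apply directly. Here the discontinuity hypothesis \eqref{H3} is crucial: since $\lim_{u\to p_u^-} f(u)>0$, one has $f(p)>0$ for $p$ slightly below $p_u$, and the equation $\frac{d}{dp}(\tfrac12 q^2) = -cq - f(p)$ forces a definite behavior of $q^2$ as $p\to p_u^-$, pinning down the trajectory uniquely despite the singularity. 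I would make this precise by working with the energy variable $E = \tfrac12 q^2$ or by integrating \eqref{eq:ode} and using that the left-limit of $f$ at $p_u$ being strictly positive yields $\tilde p''(0)<0$ and a unique local expansion. Away from $p_u$, where $q<0$ is bounded away from zero on compact subintervals, the right-hand side of \eqref{eq:trajectory3} is Lipschitz in $q$, so uniqueness propagates by a standard Gronwall argument, and I would patch the pieces together across any intermediate points where $f$ might vanish or be merely Lipschitz. Finally, uniqueness of $p_l$ follows since two solutions agreeing as functions $q(p)$ must stop at the same value of $p$.
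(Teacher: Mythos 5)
Your proposal is correct and follows essentially the same route as the paper: existence comes from the truncated initial value problem \eqref{eq3.4} (where $\tilde f$ is Lipschitz near $p_u$) followed by inversion of the strictly decreasing solution $\tilde p$, and uniqueness comes from local well-posedness at $p_u$ propagated across the interval (and across the intermediate discontinuity points, where $q\neq 0$) by Cauchy--Lipschitz. The paper settles the delicate point $q(p_u)=0$ in one line by applying Cauchy--Lipschitz directly to the second-order ODE for $\tilde p$, which is one of the two options you sketch; your alternative via the energy variable $q^2/2$ together with the sign condition \eqref{H3} (which forces $q^2(p)\gtrsim p_u-p$ and hence an integrable singularity in the Gronwall coefficient) is a sound, slightly more phase-plane-intrinsic way of reaching the same conclusion.
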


\begin{figure}[h]
	
	\centering
	\includegraphics[width=.5\linewidth]{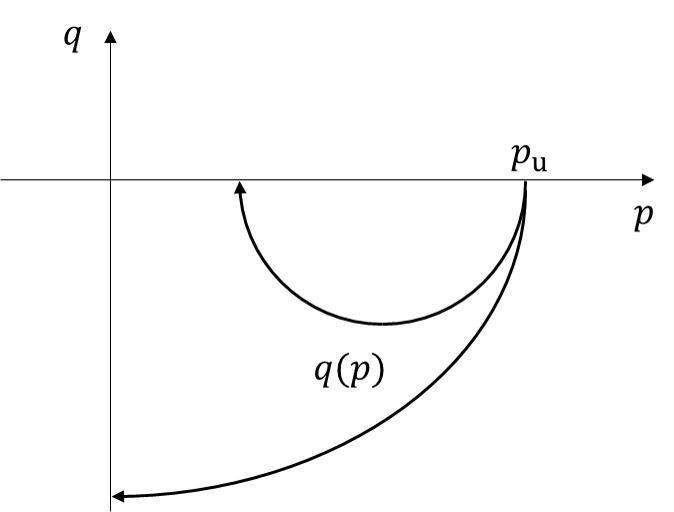}
	%		\caption{fig 1-(a)}
	\caption{Two examples of trajectory $q(p)$ in the $pq$-phase plane. }
	\label{fig2}
\end{figure}

The proposition implies that the solution trajectory of $q=q(p)$ in the phase plane starts from $p=p_u\in \{ \theta_{2i} \}_{i=1, \cdots, I}$, stays in the fourth quadrant with $p>0$ and $q<0$, and is terminated when it touches one of the two axes $p=0$ or $q=0$ (see Figure~\ref{fig2}). The existence part of Proposition~\ref{prop:trajectory} was addressed in the above discussion. As a matter of fact, uniqueness follows from the same argument, by applying the classical Cauchy-Lipschitz theorem (as many times as the solution crosses a discontinuity point of the reaction function) to the ODE $\tilde{p}'' + c \tilde{p}' + \tilde{f} (p) =0$.

Going back to the original problem, the function $q$ in Proposition~\ref{prop:trajectory} gives a traveling wave; conversely, any (connected) traveling wave corresponds to a solution of~\eqref{eq:trajectory3}. Moreover, if $p_l$ is also a steady state of \eqref{eq:rd} and $q (p_l) = 0$, then this traveling wave monotonically connects $p_u$ and $p_l$ with speed $c$ (recall Definition~\ref{def:twsol}). We will first consider $p_u =1$ in order to construct the uppermost traveling wave of the propagating terrace. Then, at a later stage, we will make an iterative argument by setting $p_u$ as the lower bound of the previous traveling wave to build up the whole propagating terrace.

The next step is to see the dependency of the solution $q$ in Proposition \ref{prop:trajectory} on the wave speed $c$.
\begin{theo}[Monotonicity with respect to $c$] \label{monotonicity} Let $f$ satisfy \eqref{H1}--\eqref{H3} and $p_u \in \{ \theta_{2i} \}_{i=1, \cdots, I}$.
\begin{enumerate}[$(i)$]
\item Let $q_1$ and $q_2$ be two solutions of \eqref{eq:trajectory3} on an interval $(a,b)$ in the sense of Definition~\ref{defi:trajectory}, with $c$ replaced by respectively~$c_1$ and $c_2$. If $c_2 < c_1$ and $q_2 (b) \leq q_1 (b) < 0$, then $q_2 (p) < q_1 (p)$ for $a <p < b$.
\item Let $q_1$ and $q_2$ be the solutions of \eqref{eq:trajectory3} given in Proposition~\ref{prop:trajectory}, with $c$ replaced by respectively $c_1$ and $c_2$, and let $p_l^1$, $p_l^2$ be the lower bounds of their respective intervals of definition. If $c_2 < c_1$, then $p_l^2 \leq p_l^1$ (i.e., $p_l$ is an increasing function of $c$) and $q_2 (p) < q_1 (p)$ for $p_l^1 < p < 1$.
\end{enumerate}
\end{theo}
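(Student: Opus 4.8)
The plan is to prove part $(i)$ by a comparison/ODE-ordering argument for the scalar equation \eqref{eq:trajectory3}, and then to deduce part $(ii)$ from part $(i)$ together with the phase-plane structure established in Proposition~\ref{prop:trajectory}. For part $(i)$, I would set $w:=q_1-q_2$ and study the sign of $w$ by comparing the two copies of \eqref{eq:trajectory3} written backwards from the right endpoint $p=b$, where we already know $w(b)=q_1(b)-q_2(b)\geq 0$. Away from the discontinuity points of $f$, subtracting the two equations gives
\begin{equation*}
\frac{dw}{dp} = (c_2-c_1) - f(p)\left(\frac{1}{q_1}-\frac{1}{q_2}\right) = (c_2-c_1) + \frac{f(p)}{q_1 q_2}\, w .
\end{equation*}
Since we integrate from $b$ toward $a$ (decreasing $p$), the relevant inequality is that, as $p$ decreases away from $b$, the inhomogeneous term $c_2-c_1<0$ forces $q_2$ to drop strictly below $q_1$; the linear term $\frac{f(p)}{q_1q_2}w$ has a bounded coefficient on any interval where both $q_i$ stay bounded away from $0$, so a Gronwall-type estimate keeps $w$ from changing sign. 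The cleanest way to make this rigorous is a strong-comparison argument: suppose $p^\ast=\sup\{p<b : q_2(p)\geq q_1(p)\}$; at such a first crossing point $w(p^\ast)=0$, and evaluating the difference of the equations at $p^\ast$ gives $\frac{dw}{dp}(p^\ast)=c_2-c_1<0$, which (integrating in the decreasing-$p$ direction) is incompatible with $w>0$ just to the right of $p^\ast$ unless no such crossing exists. This yields $q_2(p)<q_1(p)$ strictly on $(a,b)$.

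The main obstacle is handling the discontinuity points of $f$ and the possibility that $q_1$ or $q_2$ approaches $0$, since the coefficient $f/(q_1q_2)$ blows up there and the functions are only piecewise $C^1$. At a jump point $p=\theta_{2i}$ both $q_1$ and $q_2$ are continuous (by Definition~\ref{defi:trajectory}) and, by \eqref{eqn3.6}, strictly negative on the open interval, so $q_1q_2>0$ stays bounded below on any closed subinterval avoiding the endpoints where $q_i\to 0$; the ordering of $q_1$ and $q_2$ is preserved across each jump because both functions remain continuous there and the comparison argument only uses values on the intervals of $C^1$-regularity, which I would chain together finitely many times (exactly as the uniqueness proof of Proposition~\ref{prop:trajectory} chains Cauchy–Lipschitz across discontinuities). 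Near a point where $q_i\to 0^-$, the strict inequality $q_2<q_1$ is already established on the interior and passes to the closure by continuity, so the singularity of the coefficient causes no trouble for the conclusion itself.

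For part $(ii)$, both $q_1,q_2$ start from the same value $q_i(1)=0$ at $p_u=1$, so I cannot directly apply part $(i)$ at the right endpoint. Instead I would first argue that $q_2<q_1$ on a right-neighborhood of $p=1$: near $p_u$ the reaction is Lipschitz (via $\tilde f$), and a local expansion of the two trajectories leaving the steady state shows the slower wave (smaller $c_2$) detaches below the faster one, which provides an initial ordering $q_2(p)\leq q_1(p)<0$ slightly below $p=1$. Applying part $(i)$ on that interval then propagates the strict ordering $q_2(p)<q_1(p)$ downward as long as both solutions are defined, i.e. on $(\max(p_l^1,p_l^2),1)$. Finally, to obtain $p_l^2\leq p_l^1$, I would observe that since $q_2$ lies strictly below $q_1$, the trajectory $q_2$ reaches the axis $q=0$ (terminating at $p_l^2$) no later — in decreasing $p$ — than $q_1$ does; more precisely, at $p=p_l^1$ we have $q_1(p_l^1)=0$ while $q_2(p_l^1)<q_1(p_l^1)=0$ by the strict inequality, so $q_2$ is still strictly negative there and must continue to some $p_l^2<p_l^1$ (or hit $p=0$ first), giving $p_l^2\leq p_l^1$ and hence the monotonicity of $p_l$ in $c$. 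This last step is the only place where the two trajectories interact through their (a priori different) domains, so I would state the conclusion on the common interval $p_l^1<p<1$ exactly as in the theorem.
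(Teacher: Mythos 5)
Your treatment of part $(i)$ is correct but takes a genuinely different route from the paper's. You run a first-crossing (strong comparison) argument on $w=q_1-q_2$, using that at a putative crossing point $p^\ast\in(a,b)$ the singular coefficient $f/(q_1q_2)$ is multiplied by $w(p^\ast)=0$, so that $dw/dp(p^\ast)=c_2-c_1<0$, incompatible with $w>0$ just above $p^\ast$; this is sound because $q_1,q_2<0$ at every interior point by Definition~\ref{defi:trajectory}, and the chaining across the finitely many discontinuities of $f$ works as you describe. The paper instead introduces the integrating factor $\gamma(p)=\exp\bigl(-\int_{p_0}^p f/(q_1q_2)\bigr)$, shows it stays bounded up to $p_u$ (because $f>0$ near $p_u$ makes $\gamma$ monotone there), and reads the strict sign of $w$ directly from the identity $\gamma(p)w(p)=\gamma(p_u)w(p_u)+(c_1-c_2)\int_p^{p_u}\gamma$. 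For part $(i)$ both methods are fine; the paper's buys something extra in part $(ii)$, as follows.

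In part $(ii)$ both trajectories satisfy $q_1(p_u)=q_2(p_u)=0$, and your plan is to first obtain the ordering $q_2\le q_1<0$ on a left-neighborhood of $p_u$ by ``a local expansion of the two trajectories leaving the steady state'' and then invoke part $(i)$. That initial ordering is true, but it is precisely the delicate step and you give no actual argument for it: the leading-order behavior near $p_u$ is $q(p)\sim-\sqrt{2f_-(p_u-p)}$ with $f_-=\lim_{s\to p_u^-}f(s)>0$, which is \emph{independent of $c$}, so the naive expansion does not separate the two trajectories. One must go to the next order (e.g.\ via $q_1^2(p)-q_2^2(p)=\int_p^{p_u}(2c_1q_1-2c_2q_2)\,ds\sim-\tfrac43(c_1-c_2)\sqrt{2f_-}\,(p_u-p)^{3/2}<0$) or, as the paper does, use the integrated identity above, which yields $w(p)=(c_1-c_2)\gamma(p)^{-1}\int_p^{p_u}\gamma>0$ even though $w(p_u)=0$, with no local analysis at the degenerate endpoint. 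This is a genuine gap as written, though fillable. A smaller point: your deduction of $p_l^2\le p_l^1$ evaluates $q_2$ at $p_l^1$, which presupposes $q_2$ is defined there; it is cleaner to argue by contradiction as the paper does: if $p_l^2>p_l^1$ then $q_2(p_l^2)=0$ while the comparison gives $q_1(p_l^2)\ge q_2(p_l^2)=0$, forcing $q_1(p_l^2)=0$ and hence $p_l^1\ge p_l^2$.
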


\begin{proof} We only deal with statement $(ii)$ since the proof of statement $(i)$ is almost identical. We introduce $w := q_1 - q_2$ which is well-defined on the interval $( p^{max}_l, p_u )$, where $p^{max}_l =\max \{ p^2_l, p^1_l \}$. From equation~\eqref{eq:trajectory3}, we have on $( p^{max}_l, p_u )$ that
	$$ {dw \over dp} = -(c_1 - c_2) + {f(p) \over q_1 q_2} w.$$
	Now we define
	$$\gamma(p) := \exp\Big(-\int_{p_0}^p {f(s) \over q_1(s) q_2(s)} ds \Big),$$
	where $p_0 \in ( p^{max}_l, p_u )$. First we check that $\gamma (p)$ is well-defined for $p \in [p_0,p_u]$. Let $\delta >0$ be such that $f(p) >0$ in $[p_u-\delta,p_u)$. Since $q_1$ and $q_2$ are continuous and negative on $[p_0,p_u-\delta]$, we get that $\max \{ q_1 , q_2 \} < -C_\delta$ for a positive constant $C_\delta>0$. Thus, recalling the upper bound $|f| \leq M$, we find that $\gamma(p) \le \exp({M \over C_\delta^2})$ for all $p \in [p_0, p_u-\delta]$. On the other hand, $f(p) >0$ and hence $\gamma '(p) \leq 0$ on $[p_u-\delta,p_u)$. Since $\gamma (p)$ is positive, it admits a limit as $p \to p_u$ and we can extend it by continuity at $p =p_u$. The resulting function $\gamma(p)$ is well-defined and bounded on~$[p_0,p_u]$.
	
	Now, we multiply the above equation on $w$ by $\gamma$, and find that
	$$ {d(\gamma w) \over dp} = -(c_1 - c_2) \gamma.$$
	Taking $p \in [p_0,p_u]$ and integrating from $p$ to $p_u$, we obtain
	$$ \gamma(p) w(p) = \gamma(p_u) w(p_u) + (c_1 - c_2) \int_p^{p_u} \gamma(s) ds.$$
	Since $\gamma(p_u)w(p_u) \ge 0$, $c_1-c_2 > 0$, and $ \gamma(p) >0$ for $p_0 \le p < p_u$, we infer that $w(p) > 0$ for $p_0 \le p < p_u$. Since we can choose $p_0$ arbitrary close to $p^{max}_l$, this implies that $q_2(p) < q_1(p)$ for $p \in (p^{max}_l,p_u)$. By continuity, we also have $q_2(p_l^{max}) \le q_1(p_l^{max})$.

Now assume by contradiction that $p_l^2 > p_l^1$. Then $p^{max}_l = p_l^2 > p_l^1 \ge 0$. In particular, from the definition of $p_l^2$, we have $q_2(p_l^2) = 0$. From the above, $q_1(p_l^2) \ge q_2(p_l^2) = 0$, thus $q_1(p_l^2)=0$ which in turn implies that $p_l^1 \ge p_l^2 $, a contradiction. This completes the proof.
\end{proof}

\begin{theo}[Continuity with respect to $c$]\label{th:continuity}
	Let $f$ satisfy \eqref{H1}--\eqref{H3} and $p_u \in \{ \theta_{2i} \}_{i=1, \cdots, I}$. Consider a sequence $(c_n)_{n \in \mathbb{N}}$ that converges to $c\in\R$ as $n \to \infty$. We define $p_l$, $q$ from Proposition~\ref{prop:trajectory}, as well as $p_{l,n}$, $q_{n}$ from the same proposition with $c_n$ instead of $c$. 
	\begin{enumerate}[$(i)$]
		\item If the sequence $(c_n)_{n \in \mathbb{N}}$ increases, then $(p_{l,n},p_u)\subset(p_l,p_u)$, $q_{n} \to q$ locally uniformly on the interval $(p_l,p_u)$, and $\lim_{n\to\infty} q_n (p_{l,n}) = q(p_l)$.
		\item If the sequence $(c_n)_{n \in \mathbb{N}}$ decreases, then $\lim_{n \to \infty} p_{l,n} = p_l$, $q_n \to q$ locally uniformly on the interval $(p_l,p_u)$, and $\lim_{n\to\infty} q_n (p_{l,n}) = q(p_l)$.
	\end{enumerate}
\end{theo}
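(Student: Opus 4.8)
The plan is to treat both statements with a single scheme, using the monotonicity already established in Theorem~\ref{monotonicity} to reduce everything to a compactness argument and then a careful analysis at the terminal endpoint. By Theorem~\ref{monotonicity}, the families are monotone: in case $(i)$ the $q_n$ increase in $n$ and the lower bounds satisfy $p_{l,n}\le p_l$ (increasing in $n$), while in case $(ii)$ the $q_n$ decrease in $n$ and $p_{l,n}\ge p_l$ (decreasing in $n$); in particular the intervals of definition are nested. In either case I set $\bar p:=\lim_n p_{l,n}$, so that $\bar p\le p_l$ in case $(i)$ and $\bar p\ge p_l$ in case $(ii)$. The goal is then threefold: first prove locally uniform convergence of $q_n$ on the interior to the right limit; second, in case $(ii)$, upgrade $\bar p\ge p_l$ to the equality $\bar p=p_l$; and third, establish the boundary-value convergence $q_n(p_{l,n})\to q(p_l)$.

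The main technical device will be the substitution $Q_n:=q_n^2$, which removes the singularity of \eqref{eq:trajectory3}: since $q_n<0$, one has $Q_n'=2c_n\sqrt{Q_n}-2f(p)$ with $Q_n(p_u)=0$. A Gronwall argument gives a uniform bound $\sup_n\|Q_n\|_\infty<\infty$, so the right-hand side is uniformly bounded and the $Q_n$ are uniformly Lipschitz on their \emph{whole} intervals, including up to the endpoints. On any compact $K\subset(\bar p,p_u)$ the monotonicity of $q_n$ squeezes them between two negative continuous functions, hence they are bounded away from $0$ there; consequently the right-hand side of \eqref{eq:trajectory3} is bounded on $K$ and the $q_n$ are equi-Lipschitz. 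Arzel\`a--Ascoli together with the monotone pointwise limit then yields $q_n\to\tilde q$ locally uniformly on $(\bar p,p_u)$, and passing to the limit in the integral form of \eqref{eq:trajectory3} shows that $\tilde q$ solves the trajectory equation with the limiting speed $c$. To identify $\tilde q$ with $q$, I work near $p_u$ in the profile variable $x$, where the reaction is replaced by the Lipschitz $\tilde f$ as in \eqref{eq3.4}: ordinary continuous dependence on $c$ for that regular second-order ODE gives $q_n\to q$ uniformly on a left neighborhood of $p_u$, and then Cauchy--Lipschitz uniqueness for \eqref{eq:trajectory3} away from $q=0$ (applied across each discontinuity of $f$, exactly as in Proposition~\ref{prop:trajectory}) propagates $\tilde q=q$ to the whole common interval. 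This already delivers the locally uniform convergence claimed on $(p_l,p_u)$ in case $(i)$.

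The heart of the proof is the endpoint analysis, and here the uniform Lipschitz bound on $Q_n$ does the work. At a positive turning point one has $Q_n(p_{l,n})=0$, so with $L$ the Lipschitz constant,
\[
Q_n(p)=Q_n(p)-Q_n(p_{l,n})\le L\,(p-p_{l,n})\qquad\text{for } p>p_{l,n}.
\]
In case $(ii)$, if $\bar p>p_l$ then $p_{l,n}\ge\bar p>0$ eventually, and letting $n\to\infty$ then $p\downarrow\bar p$ in the displayed inequality forces $q(\bar p)^2\le 0$, i.e. $q(\bar p)=0$, contradicting $q<0$ on the interior $(p_l,p_u)\ni\bar p$. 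Hence $\bar p=p_l$, the domains exhaust $(p_l,p_u)$, and the locally uniform convergence follows as above. The same estimate yields the boundary-value convergence: when terminal points are positive, $q_n(p_{l,n})=0=q(p_l)$; when they lie on $\{p=0\}$, the uniform Lipschitz control of $Q_n$ up to $0$ gives $Q_n(0)\to Q(0)$ and hence $q_n(0)\to q(0)$; and the same inequality rules out a positive turning point accumulating at $0$ when $q(0)<0$, so that a transversal crossing of $\{p=0\}$ is stable and handled by ordinary continuous dependence.

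I expect the delicate point—and the reason the two cases are stated asymmetrically—to be the failure of left-continuity of $p_l$ in $c$, which is exactly why case $(i)$ does not assert $p_{l,n}\to p_l$. When $c$ is a connecting speed, the trajectories for the smaller $c_n$ overshoot the platform $\theta_{2i}$, and the limit $\tilde q$ is the overshoot branch rather than $q$ below $\theta_{2i}$; this is precisely the non-uniqueness at $(\theta_{2i},0)$ produced by the jump \eqref{H3}. The way around this is to confine the convergence statement to $(p_l,p_u)$ and to extract all boundary information solely through the regular variable $Q=q^2$, whose Lipschitz regularity at the corners is made available by the non-Lipschitz hypothesis \eqref{H3}; verifying that this machinery closes every terminal sub-case (positive turning point, tangential arrival at the corner, transversal crossing of $\{p=0\}$) is where the care is required.
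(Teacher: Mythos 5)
Your proposal is correct in substance, and its core coincides with the paper's proof on the part that actually carries the theorem: the endpoint analysis via $Q_n=q_n^2$. The uniform bound $\left|\tfrac{dQ_n}{dp}\right|=|{-2c_nq_n-2f}|\le 2\overline{c}B+2M$ and the resulting inequality $Q_n(p)\le L\,(p-p_{l,n})$ at a positive turning point are exactly the estimates the paper uses to prove $p_\infty=p_l$ in case $(ii)$ and the convergence of the boundary values (via Arzel\`a--Ascoli applied to the equi-Lipschitz family $Q_n$). Where you genuinely diverge is the interior convergence: the paper sets $w_n=q-q_n$, multiplies by the integrating factor $\gamma_n(p)=\exp\bigl(-\int_{p_0}^p f/(qq_n)\bigr)$ (bounded because $f>0$ near $p_u$ makes $\gamma_n$ decreasing there), and integrates to get the quantitative bound $|w_n(p_0)|\le M_\gamma|c-c_n|$, from which locally uniform convergence to $q$ follows by Dini; you instead use soft compactness (equi-Lipschitz of $q_n$ on compacts where they are pinched below zero), pass to the limit in the equation, and then identify the limit with $q$ by a separate uniqueness argument at $p_u$. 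Both routes close; the paper's buys an explicit rate in $|c-c_n|$ and avoids the identification step entirely, while yours is softer but needs the extra argument at the degenerate corner $(p_u,0)$. Two small points of care in your version: the identification near $p_u$ is more economically done by the monotone squeeze $q_0\le q_n\le q$ (resp.\ $q\le q_n\le q_0$), which forces the compactness limit to vanish at $p_u^-$ so that the uniqueness of Proposition~\ref{prop:trajectory} applies directly, rather than by inverting the profile variable near the point where $\phi'$ vanishes; and your claim that the $q_n$ are bounded away from $0$ on every compact of $(\bar p,p_u)$ is only guaranteed on compacts of $(p_l,p_u)$ --- below $p_l$ the monotone limit may touch zero at intermediate platforms, which is precisely the phenomenon isolated later in Theorem~\ref{th:beyond} and is why statement $(i)$ restricts the convergence to $(p_l,p_u)$.
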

\begin{proof}
	$(i)$ We first consider the case of a increasing sequence. From Theorem \ref{monotonicity}, we have $p_{l,n} \leq p_l$ for all $n \in \mathbb{N}$. Thus, $q_n$ and $q$ are well-defined on $[p_{l}, p_u]$. Let us then introduce $w_n := q - q_n$ which satisfies
		$$ {dw_n \over dp} = -(c - c_n) + {f(p) \over q q_n} w.$$
		As in the proof of Theorem~\ref{monotonicity}, we fix $p_0 \in (p_{l},p_u)$ and define
		$$\gamma_n (p) := \exp\Big(-\int_{p_0}^p {f(s) \over q(s) q_n(s)} ds \Big),$$
		which can be extended by continuity to the closed interval $[p_0,p_u]$. In particular, the function~$\gamma_n$ is bounded on the interval $[p_0,p_u]$. Let us now check that it is also uniformly bounded with respect to $n \in \mathbb{N}$. First, thanks to Theorem~\ref{monotonicity}, we have that $q_n \leq q <0$ in $[p_0,p_u)$, thus
		\begin{equation}\label{continuity_1}
		0 \leq \gamma_n (p) \leq  \exp\Big( \int_{p_0}^p {M \over |q| |q_n|} ds \Big) \le \exp\Big( \int_{p_0}^p {M \over q^2} ds \Big),
		\end{equation}
		for some $M>0$. Now let $\delta >0$ be such that $f >0$ on $[p_u-\delta,p_u)$. Since $q$ is negative and continuous in $[p_0, p_u -\delta]$, we get from \eqref{continuity_1} that $\gamma_n (p)$ is uniformly bounded with respect to $p \in [p_0, p_u-\delta]$ and $n \in \mathbb{N}$. On the other hand, we have that $\gamma_n (p)$ is decreasing in $[p_u-\delta,p_u]$. It easily follows that there exists $M_\gamma >0$ such that
		$$ 0 \leq \gamma_n (p) \leq M_\gamma,$$ for all $p \in [p_0,p_u]$ and $n \in \mathbb{N}$. Next, we multiply the above equation on $w_n$ by $\gamma_n$, and find that
		$$ {d(\gamma_n w_n) \over dp} = -(c - c_n) \gamma_n.$$
		Integrating from $p_0$ to $p_u$, we obtain
		$$ \gamma_n (p_0) w_n(p_0) = \gamma_n (p_u) w_n(p_u) + (c - c_n) \int_{p_0}^{p_u} \gamma_n(s) ds.$$
		From its definition, $\gamma_n(p_0) = 1$. Since $\gamma_n(p) \leq  M_\gamma$, and $w_n(p_u) = 0$, we get
		$$|w_n(p_0)| \le |c-c_n| \int_{p_0}^{p_u} |\gamma_n(s)| ds \le M_\gamma|c-c_n|$$
		Thus, $q_n(p_0) \to q(p_0)$ as $n \to \infty$. Since we chose $p_0$ arbitrarily in $(p_{l}, p_u)$, we have proved pointwise convergence in $(p_l,p_u)$ (and even in $(p_l,p_u]$ since $q_n (p_u) = q(p_u)= 0$ for all~$n$). Applying Dini's theorem, the convergence is also locally uniform in the same interval.
		
		Now, we will show that $\lim_{n \to \infty} q_n (p_l) = q(p_l)$. Since the functions $q$ and $q_n$ are nonpositive, it is enough to show that $\lim_{n \to \infty} q_n^2 (p_l) = q^2(p_l)$. Let us consider the equation satisfied by~$q_n^2$, which is obtained by multiplying~\eqref{eq:trajectory3} satisfied by~$q_n$ by $q_n$ itself. Define $\overline{c} = \max_{n\in \mathbb{N}} \left\{ |c|, |c_n| \right\}$ and notice that, by Theorem~\ref{monotonicity}, we have $q_n(p) > -B$ for all $n$ and $p\in (p_{l,n},p_u)$ where $-B$ is a lower bound of $q_0$. Recalling also that $M$ is an upper bound for $|f|$, we get that
		\begin{equation*}
		\left| {dq_n^2 \over dp} \right|= \left| -2c_n q_n - 2f(p) \right| < 2 \overline{c} B + 2M 	\quad \mbox{ for } p\in (p_l,p_u).
		\end{equation*}
It follows that the functions $\{ q^2 \} \cup \{ q_n^2\,  | \, n \in \mathbb{N} \}$ are uniformly Lipschitz continuous on the interval~$[p_l, p_u]$. By Arzela-Ascoli's theorem and the uniqueness of the limit, we get as wanted that $\lim_{n \to \infty} q_n^2 (p_l) = q^2 (p_l)$.

$(ii)$ Let us now consider a decreasing sequence $(c_n)_{n \in \mathbb{N}}$ such that $c_n \to c$ as $n \to \infty$. From Theorem~\ref{monotonicity}, we know that $p_{l,n}$ is a decreasing sequence which is bounded from below by $p_l \geq 0$. Thus we can define
$$p_{\infty} = \lim_{n \to \infty} p_{l,n} \in [p_l,p_u).$$
In particular, for any $p \in ( p_{\infty} , p_u)$, we can find $N$ large enough such that $p \in (p_{l,N},p_u)$. Then, for any $n_2 \geq n_1 > N$, $q_{n_1} (p )$ and $ q_{n_2} (p)$ are well defined and
	\begin{equation*}\label{eq:lem_continuity_1}
		0 > q_{n_1} (p ) \geq q_{n_2} (p) \geq q (p).
	\end{equation*}
Arguing as in the proof of statement~$(i)$, we can find that $q_n \to q \le 0$ locally uniformly in $(p_{\infty} , p_u]$.

It remains to show that $p_\infty = p_l$ and $\lim_{n \to \infty} q_n (p_{l,n}) = q (p_l)$. We first consider the case when $p_{l,n}=0$ for some $n$. Then, by Theorem~\ref{monotonicity}, we have $p_l= 0$ and $p_{l,n}= 0$ for any large $n$, so that $\lim_{n \to \infty} p_{l,n} = p_l$ is satisfied. Moreover, as in the proof of statement~$(i)$, we have that
		\begin{equation*}
		\left| {dq_n^2 \over dp} \right| = \left| -2c_n q_n - 2f(p) \right| < 2 \overline{c} B + 2M 	\quad \mbox{ for } p\in (0,p_u),
		\end{equation*}
		for any $n$ large enough, where $\overline{c} = \max_{n \in\mathbb{N}} \{ |c|,|c_n| \}$ and $B$ (resp. $M$) is an upper bound for $|q|$ (resp. $|f|$). Thus the sequence $\{ q_n^2\}$ is uniformly Lipschitz continuous, hence equicontinuous. From Arzela-Ascoli's theorem and uniqueness of the limit, we have that $q_n^2$ converges to $ q^2$ uniformly in $[0,p_u]$, and in particular $\lim_{n \to \infty} q_n (p_{l,n}) = \lim_{n \to \infty} q_n (0) = q(0)$.

	Now consider the case when $p_{l,n}>0$ for all $n$. Then, $q_n(p_{l,n}) = 0$ for all $n$ and $\lim_{n\to\infty} q_n (p_{l,n})=0$. It is enough to prove that $q (p_\infty) = 0$, so that in particular $p_l = p_\infty$. To do this, we again use the fact that
\begin{equation*}
		\left| {dq_n^2 \over dp} \right| = | -2c_n q_n - 2f(p)| < 2  \overline{c} B + 2M 	\quad \mbox{ for } p\in (p_{l,n},p_u).
\end{equation*}
Integrating from $p_{l,n}$ to any $p \in (p_{l,n},p_u)$ and recalling that $q_n (p_l{,n}) =0$, we get that
\begin{equation*}
		|q_n^2(p)| = |q_n^2(p)-q_n^2(p_{l,n})| \le (2 \overline{c} B + 2M)|p-p_{l,n}| .
		\end{equation*}
Passing to the limit as $n \to \infty$, it follows that
$$| q^2 (p) | \leq ( 2 \overline{c} B + 2M ) | p - p_\infty|,$$
for all $p \in (p_\infty,p_u)$, hence $ q^2 (p_\infty)= 0$ by continuity.
\end{proof}
We will use a continuity argument to find a $c$ such that the solution in Proposition \ref{prop:trajectory} provides a traveling wave solution connecting two stable steady states. To do so, we first consider the cases when~$|c|$ is too large, which is the purpose of the next two results.
\begin{lem}[Lower bound of traveling wave speeds]\label{th:-clarge}
	Let $f$ satisfy \eqref{H1}--\eqref{H3} and $p_u \in \{ \theta_{2i} \}_{i=1, \cdots, I}$.  There exists $ C_1\in\R$ such that the solution trajectory touches the vertical $q$-axis before touching the horizontal $p$-axis for all $c \leq C_1$. In other words, $p_l = 0$ and $q (0) < 0$.
\end{lem}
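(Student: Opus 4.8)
The plan is to exploit the fact that, for $c$ very negative, the constant $-c = |c|$ dominates the right-hand side of \eqref{eq:trajectory3}, so the trajectory $q(p)$ plunges steeply as $p$ decreases and can never climb back to the axis $q=0$. I would make this quantitative in two stages. First I fix $\delta_1>0$ small enough that $(p_u-\delta_1,p_u)$ lies inside the interval on which $f>0$: this is possible by \eqref{H1}, since $f>0$ immediately below the stable state $p_u$, and I may also take $\delta_1$ so small that $f$ is continuous there, so that the only discontinuity met in this layer is the one at $p_u$ itself.

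In the first (near-$p_u$) stage, on $(p_u-\delta_1,p_u)$ I have $f>0$ and $q<0$, hence $-f(p)/q>0$ and
\[
\frac{dq}{dp} = -c - \frac{f(p)}{q} \ge -c = |c|.
\]
Integrating this bound from $p$ to $p_u$ and using $q(p_u)=0$ gives $q(p)\le -|c|(p_u-p)$, and in particular $q(p_u-\delta_1)\le -|c|\,\delta_1$. Thus the trajectory is already deep in the fourth quadrant once it leaves the initial layer, with $|q|$ of order $|c|$.

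In the second (away-from-$p_u$) stage, I claim that if $|c|$ is large enough then $q(p)<-\tfrac12|c|\,\delta_1$ for every $p\in(0,p_u-\delta_1]$. Indeed, as long as $q\le-\tfrac12|c|\delta_1$ one has $|q|\ge\tfrac12|c|\delta_1$, so $|f(p)/q|\le 2M/(|c|\delta_1)$ and therefore
\[
\frac{dq}{dp} = |c| - \frac{f(p)}{q} \ge |c| - \frac{2M}{|c|\,\delta_1} > 0
\]
provided $|c|^2 > 2M/\delta_1$. Hence $q$ is strictly increasing in $p$ on this region, so lowering $p$ from $p_u-\delta_1$ only makes $q$ smaller; a standard first-exit (continuity) argument then shows the bound $q<-\tfrac12|c|\delta_1$ cannot be violated. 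Consequently $q$ stays bounded away from $0$, the quotient $f/q$ stays bounded so $dq/dp$ stays bounded and the trajectory extends all the way to $p=0$, and $q$ never meets $q=0$ for $p>0$. By Proposition~\ref{prop:trajectory} this forces $p_l=0$ with $q(0)\le-\tfrac12|c|\delta_1<0$. Taking $C_1$ to be any sufficiently negative constant (e.g. $C_1:=-\sqrt{2M/\delta_1}-1$) then yields the claim for all $c\le C_1$.

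The main obstacle — and the only delicate point — is the singular term $f(p)/q$ as $q\to 0$, together with the jump discontinuities of $f$. The singularity is dangerous only near $q=0$, which occurs solely at the start near $p_u$; but there $f>0$, so the singular term carries the favorable sign and in fact accelerates the descent rather than obstructing it (this is exactly the first stage). Away from that layer the second stage keeps $|q|$ of order $|c|$, so $f/q$ is uniformly $O(1/|c|)$ and the discontinuities of $f$ are harmless, since $q$ stays bounded away from $0$ when crossing any intermediate steady state; the single uniform bound $M=\|f\|_\infty$ suffices to control them. An alternative, slightly slicker route is the energy identity $q^2(p)=2c\int_p^{p_u}q+2\int_p^{p_u}f$, obtained from $(q^2)'=-2cq-2f$: a trajectory returning to $0$ at some $p_l>0$ would have a lowest point $p_*$ with $|q(p_*)|\le M/|c|$, contradicting the depth $|q|\ge|c|\delta_1$ from the first stage. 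However, justifying the critical-point relation at the corners where $f$ jumps makes that version marginally more fragile, so I would present the two-stage barrier argument as the primary proof.
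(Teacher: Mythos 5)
Your argument is correct, but it follows a genuinely different route from the paper's. The paper argues by contradiction in one line of integration: assuming $p_l>0$, it multiplies \eqref{eq:trajectory3} by $q$ and integrates over $(p_l,p_u)$ to get $0=-c\int_{p_l}^{p_u}q\,dp-\int_{p_l}^{p_u}f\,dp$, then invokes the monotonicity result (Theorem~\ref{monotonicity}) to see that $\int_{p_l}^{p_u}q\,dp$ stays bounded away from $0$ from above (it is negative and increasing in $c$), so the term $-c\int q\,dp$ diverges to $-\infty$ as $c\to-\infty$ while $\int f\,dp$ is bounded by $M$ --- a contradiction. Note that this version needs neither a critical point of $q^2$ nor any care at the corners where $f$ jumps (only $q(p_l)=q(p_u)=0$ is used), so the fragility you feared in your ``slicker alternative'' is absent from the paper's actual implementation; your sketched variant with a lowest point $p_*$ satisfying $|q(p_*)|\le M/|c|$ is the one that would require extra care. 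Your primary two-stage barrier proof is sound: stage one correctly exploits the sign of $f$ on $(p_u-\delta_1,p_u)$ guaranteed by \eqref{H1} and \eqref{H3} to get $q(p_u-\delta_1)\le-|c|\delta_1$ (and, as a byproduct, rules out $p_l\in[p_u-\delta_1,p_u)$), and stage two is a standard invariant-region argument for which the piecewise-$C^1$ regularity of $q$ from Definition~\ref{defi:trajectory} suffices, since $|f/q|\le 2M/(|c|\delta_1)$ is uniform across the jump points. What your approach buys is an explicit, quantitative threshold $C_1=-\sqrt{2M/\delta_1}-1$ and a lower bound $|q(0)|\ge\tfrac12|c|\delta_1$ on the terminal slope, without relying on Theorem~\ref{monotonicity}; what it costs is length and the extra bookkeeping of the first-exit argument, which the paper's global identity avoids entirely.
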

\begin{proof}
	Assume that $p_l >0$, so that $q(p_l) =0$. Multiply~\eqref{eq:trajectory3} by $q$ and integrate it from $p_l$ to~$p_u$. Then we obtain
	$${q(p_u)^2 - q(p_l)^2 \over 2} = -c\int_{p_l}^{p_u} q dp - \int_{p_l}^{p_u} f(p) dp = 0,$$
	since $q(p_l) = q(p_u) = 0$. 	On the other hand, we have that $\int_{p_l}^{p_u} q dp < 0$, and by Theorem~\ref{monotonicity} this is also an increasing function of the parameter~$c$. In particular,
$$- c \int_{p_l}^{p_u} q dp \to - \infty,$$
as $c \to - \infty$. Thus, the above equality clearly leads to a contradiction if $c$ is small enough.
\end{proof}

\begin{lem}[Minimum monostable traveling wave speed]\label{th:clarge} Let $f$ satisfy \eqref{H1}--\eqref{H3} and $p_u=\theta_{2i_0}$ for some $i_0\in\{1,\cdots,I\}$. Then, there exists $C_2>0$ such that $c \geq C_2$ if and only if the solution of \eqref{eq:trajectory3} satisfies that $p_l = \theta_{2i_0 -1} >0$, and hence $q (p_l) =0$.
\end{lem}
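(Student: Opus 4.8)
The plan is to show that the set
$$A := \{\, c\in\R \;:\; p_l = \theta_{2i_0+1}\,\}$$
of speeds for which the trajectory of Proposition~\ref{prop:trajectory} stops at the unstable steady state $\theta_{2i_0+1}$ lying immediately below $p_u=\theta_{2i_0}$ (recall that $f>0$ on $(\theta_{2i_0+1},p_u)$ by \eqref{H1}) is exactly a half line $[C_2,\infty)$ with $C_2>0$. First I would record a reduction. The trajectory can never return to $q=0$ at an interior point $p\in(\theta_{2i_0+1},p_u)$: at such a point $f(p)>0$ while $q\to0^-$, so $-f(p)/q\to+\infty$ and $dq/dp\to+\infty$, which is incompatible with $q$ climbing back up to $0$ as $p$ decreases. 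Hence $p_l\le\theta_{2i_0+1}$ for every $c$, with equality precisely when the trajectory reaches $q=0$ exactly at $\theta_{2i_0+1}$; and in that case $q(p_l)=0$ by Proposition~\ref{prop:trajectory}. This reduces the lemma to identifying $A$.

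Next I would establish the cheap structural facts about $A$. By the monotonicity of $p_l$ in $c$ (Theorem~\ref{monotonicity}$(ii)$) together with the bound $p_l\le\theta_{2i_0+1}$, the set $A$ is upward closed: if $c\in A$ and $c'>c$ then $\theta_{2i_0+1}=p_l(c)\le p_l(c')\le\theta_{2i_0+1}$, so $c'\in A$. To see that $A\subset(0,\infty)$, suppose $c\in A$, multiply \eqref{eq:trajectory3} by $q$ and integrate over $(\theta_{2i_0+1},p_u)$; since $q(\theta_{2i_0+1})=q(p_u)=0$ this gives, exactly as in the proof of Lemma~\ref{th:-clarge},
$$0=-c\int_{\theta_{2i_0+1}}^{p_u}q\,dp-\int_{\theta_{2i_0+1}}^{p_u}f\,dp.$$
As $\int q\,dp<0$ and $\int f\,dp>0$, this forces $c>0$.

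The core of the argument is to show $A\neq\emptyset$, i.e. that for all large $c$ the trajectory stops at $\theta_{2i_0+1}$; this is the monostable existence step and I expect it to be the main obstacle. I would use a linear barrier in the phase plane. Set
$$K:=\sup_{\theta_{2i_0+1}<p<p_u}\frac{f(p)}{p-\theta_{2i_0+1}},$$
which is finite and positive: near $\theta_{2i_0+1}$ the quotient is controlled by the Lipschitz constant of $f$ (here $f(\theta_{2i_0+1})=0$ and $f$ is Lipschitz away from the stable states by \eqref{H2}), while on the remainder of the interval $f\le M$ and $p-\theta_{2i_0+1}$ is bounded below. Consider the line $L(p):=-\sqrt{K}\,(p-\theta_{2i_0+1})$. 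The trajectory starts above $L$ at $p=p_u$ (where $q=0>L(p_u)$), and at any putative first contact point $p^*$ one computes
$$\frac{dq}{dp}(p^*)-L'(p^*)=-c+\sqrt{K}+\frac{f(p^*)}{\sqrt{K}\,(p^*-\theta_{2i_0+1})}\le -c+2\sqrt{K}.$$
Thus for $c>2\sqrt{K}$ the trajectory's slope is strictly smaller than that of $L$ at $p^*$, contradicting the geometry of a first contact. Hence $q(p)>L(p)$ throughout $(\theta_{2i_0+1},p_u)$, and the squeeze $L(p)<q(p)<0$ forces $q\to0$ as $p\to\theta_{2i_0+1}^+$; therefore $p_l=\theta_{2i_0+1}$ and $c\in A$.

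Finally I would set $C_2:=\inf A$; by the previous two paragraphs $C_2\in[0,2\sqrt{K}]$ and $(C_2,\infty)\subset A$. To promote this to $A=[C_2,\infty)$ with $C_2>0$, I would invoke continuity in $c$ along decreasing sequences (Theorem~\ref{th:continuity}$(ii)$): taking $c_n\downarrow C_2$ with $c_n\in A$ we have $p_{l,n}=\theta_{2i_0+1}$ and $q_n(p_{l,n})=0$, and passing to the limit gives $p_l(C_2)=\theta_{2i_0+1}$ and $q(\theta_{2i_0+1};C_2)=0$, so $C_2\in A$. The same limiting argument applied to a hypothetical sequence $c_n\downarrow0$ in $A$ would yield $0\in A$, contradicting $A\subset(0,\infty)$; hence $C_2>0$. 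This establishes $A=[C_2,\infty)$, that is, $c\ge C_2$ if and only if $p_l=\theta_{2i_0+1}>0$ with $q(p_l)=0$, which is the claim.
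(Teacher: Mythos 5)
Your proof is correct and rests on the same core device as the paper's: a linear phase-plane barrier $q=-\sqrt{K}\,(p-\theta)$ below the trajectory, with $K$ dominating $f(p)/(p-\theta)$ on the monostable interval and the threshold speed $2\sqrt{K}$, combined with the monotonicity of $p_l$ in $c$ (Theorem~\ref{monotonicity}) to upgrade ``all large $c$ work'' to a half-line. The differences are in execution, and they are mostly to your credit. You justify the no-crossing property by a slope comparison at a first contact point, which is clean and airtight; the paper instead integrates the identity for $\tfrac12\frac{d}{dp}(\overline{q}^2-q^2)$ from the contact point to $p_u$ and asserts the right-hand side is negative, which does not follow from the signs actually available there ($-c\int(\overline{q}-q)\,dp>0$ while $-\int(\overline{f}-f)\,dp\le 0$), so your pointwise argument is the one that genuinely closes that step. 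You also supply two items the paper leaves implicit: the positivity $C_2>0$, via the identity $c\int q\,dp=-\int f\,dp$ over the interval (the same computation as in Lemma~\ref{th:-clarge}), and the fact that the threshold itself belongs to the admissible set, via Theorem~\ref{th:continuity}$(ii)$ along a decreasing sequence --- this last point is needed for the stated equivalence with $c\ge C_2$ rather than $c>C_2$. Finally, you label the unstable state immediately below $p_u=\theta_{2i_0}$ as $\theta_{2i_0+1}$, which is the indexing consistent with the ordering $\theta_0>\theta_1>\cdots>\theta_{2I}$ in \eqref{H1}; the statement and the paper's proof write $\theta_{2i_0-1}$, which under \eqref{H1} lies above $p_u$ and appears to be a typo, so your reading is the right one.
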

\begin{proof}
	Since $f(p_u) = 0$ and $f$ is $C^1$ on $[\theta_{2i_0 -1},p_u)$, we can choose $K >0$ large enough so that
		$$\overline{f}(p) := K(p-\theta_{2i_0 -1}) \geq f(p),$$
	for all $p \in [\theta_{2i_0 -1},p_u]$. Now we take $c =2 \sqrt{K}$ and consider the ODE system
	\begin{eqnarray}
	\nonumber \overline{p}' &=& \overline{q}, \\ \nonumber \overline{q}' &=& -c\overline{q}-\overline{f}(\overline{p}).
	\end{eqnarray}
	Then the trajectory $(\overline{p},\overline{q})$ starting from $(p_u, - \sqrt{K} (p_u -\theta_{2 i_0 -1}) )$ of this ODE system is a straight line which converges to $(\theta_{2i_0 -1},0)$. As before, we can rewrite it as a function $\overline{q}$ of $p \in [\theta_{2i_0 -1},p_u]$, which satisfies
	\begin{equation} \label{eq:trajectory2}
	{1\over 2} {d\overline{q}^2 \over dp} = -c\overline{q}-\overline{f}(p),
	\end{equation}on $(\theta_{2 i_0 -1},p_u)$. Now we claim that
		\begin{equation}\label{cla:trajsuper}
		\forall p \in (\theta_{2i_0 -1}, p_u], \quad q (p ) > \overline{q} (p).
		\end{equation}
First, we point out that $q$ is indeed well-defined on $[\theta_{2i_0 -1},p_u]$, i.e. $p_l \leq \theta_{2i_0 -1}$, due to the fact that $f$ is positive between $\theta_{2i_0 -1}$ and $p_u$ and thus the trajectory of $q$ cannot touch the horizontal axis in the open interval.

Then, by construction, we have that $\overline{q} (p_u) < 0 = q(p_u)$. Let us proceed by contradiction and assume that there is some $p_0 > \theta_{2i_0-1}$ such that $\overline{q} (p) < q (p)$ for all $p \in (p_0,p_u]$ but $\overline{q} (p_0) = q (p_0)$. Multiplying \eqref{eq:trajectory3} by $q$ and substracting it from \eqref{eq:trajectory2}, we get the following equation:
	\begin{equation} \nonumber
	{1\over 2} {d(\overline{q}^2 - q^2) \over dp} = -c(\overline{q}-q)-(\overline{f}(p)-f(p)).
	\end{equation}
Integrating from $p_0$ to $p_u$ and recalling that $c= 2\sqrt{K}$, we get
	\begin{equation*}
	0 < \frac{\overline{q}^2 (p_u) - q^2 (p_u) }{2} = -c \int_{p_0}^{p_u} (\overline{q}-q) dp - \int_{p_0}^{p_u} (\overline{f}(p)-f(p)) dp < 0.
	\end{equation*}
	This is a contradiction and the claim~\eqref{cla:trajsuper} is proved. Finally, since $\overline{q} (\theta_{2i_0 -1}) = 0$ and $q$ cannot take the value 0 in the interval $(\theta_{2i_0 -1},p_u)$, we conclude that $p_l = \theta_{2 i_0 -1}$ and $q (\theta_{2i_0 -1}) = 0$ when $c = 2 \sqrt{K}$. Furthermore, we point out that $K$ can be increased without loss of generality so that the same conclusion holds for all $c$ large enough. Therefore, there exists $C>0$ such that $p_l=\theta_{2i_0-1}$ for all $c>C$. By the monotonicity with respect to $c$, Theorem \ref{monotonicity}, we may take the smallest such $C$ denoted by $C_2$.
\end{proof}

Now we are ready to prove the existence of a traveling wave connecting 1 (or any positive stable steady state) and some intermediate stable steady state $\theta_{2i}$ ($i \in 0 , \cdots I-1$). The terrace will be obtained by iterating this argument, which is why we state the following theorem with any stable steady state $p_u$ instead of 1.

\begin{theo}[Minimum bistable traveling wave speed]\label{th:upper_wave}
Let $f$ satisfy \eqref{H1}--\eqref{H3} and $p_u \in \{ \theta_{2i} \}_{i=1, \cdots, I}$. For any $c \in \mathbb{R}$, denote by $q (p; c)$ the solution of \eqref{eq:trajectory3}, and by $p_l (c)$ the lower bound of the solution domain given by Proposition \ref{prop:trajectory}. Denote by $c^*$ the maximal wave speed such that the solution does not touch the p-axis, i.e.
\[
c^* := \sup\left\{ c \ | \  p_l (c) = 0 \mbox{ and } q (p_l (c);c) < 0\right\}.
\]
Then, the function $q (\cdot;c^*)$ is a connected traveling wave, which monotonically connects $p_u$ and another stable steady state $p^*$ with $p^* < p_u$.
\end{theo}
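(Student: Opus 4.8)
The plan is to prove that the supremum $c^*$ is finite, that at $c=c^*$ the trajectory $q(\cdot;c^*)$ terminates on the horizontal axis at the point $p^*:=p_l(c^*)$ (i.e. $q(p^*;c^*)=0$), and finally that this $p^*$ is necessarily a stable steady state; then $q(\cdot;c^*)$ is the desired connecting wave. First I would check that $c^*$ is well-defined and finite. By Lemma~\ref{th:-clarge} the defining set contains the half-line $(-\infty,C_1]$, so it is nonempty; by Lemma~\ref{th:clarge} every $c\ge C_2$ satisfies $p_l(c)>0$, so the set is bounded above by $C_2$. Hence $C_1\le c^*\le C_2$. Writing $p^*:=p_l(c^*)$, I would then show $q(p^*;c^*)=0$. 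If $p^*>0$ this is automatic from Proposition~\ref{prop:trajectory}. If $p^*=0$, I argue by contradiction: were $q(0;c^*)<0$, the orbit would reach the vertical axis transversally, and since the vector field of \eqref{phaseplane1}--\eqref{phaseplane2} depends continuously on $c$, every $c$ slightly larger than $c^*$ would also reach $p=0$ with $q<0$, hence belong to the defining set, contradicting maximality. Thus $q(p^*;c^*)=0$ in every case, and $q(\cdot;c^*)$ is a connected traveling wave terminating at $(p^*,0)$.

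The core of the proof is to identify $p^*$ as a stable steady state. I would first record the energy identity obtained by multiplying \eqref{eq:trajectory3} by $q$: setting $E:=q^2/2$ one gets $E'=-c^*q-f$, with $E(p_u)=E(p^*)=0$ and $E>0$ on $(p^*,p_u)$. Expanding near $p^*$ (using the one-sided limit of $f$ from above) yields $E(p)=-f(p^*)(p-p^*)+o(p-p^*)$, so that $E\ge 0$ forces $f(p^*)\le 0$; in particular $p^*$ cannot lie strictly inside the region just below $p_u$ where $f>0$. To exclude the remaining possibility that $f(p^*)<0$ with $p^*$ not a steady state, I would use a persistence argument: there $f$ is $C^1$, and the field equals $(0,-f(p^*))$ with $-f(p^*)>0$ at $(p^*,0)$, so the orbit crosses $\{q=0\}$ transversally. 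By continuous dependence on $c$ the crossing persists and $p_l(c)\to p^*>0$ as $c\to c^*$; hence $p_l(c)>0$ for all $c$ near $c^*$, so no $c<c^*$ near $c^*$ can escape, contradicting $c^*=\sup$. Therefore $f(p^*)=0$ and $p^*$ is a steady state.

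Finally I would rule out that $p^*$ is an unstable steady state $\theta_{2j-1}$. Such a state is approached from above through a region where $f>0$, and its linearization in \eqref{phaseplane1}--\eqref{phaseplane2} has determinant $f'(\theta_{2j-1})>0$, so $(\theta_{2j-1},0)$ is a node when $c^*\ge 2\sqrt{f'(\theta_{2j-1})}$ and a spiral otherwise. In the spiral case no monotone trajectory can limit onto it, so $q(\cdot;c^*)$ cannot connect to it. In the node case the connection is a monostable front that persists for a whole interval of speeds about $c^*$ (exactly as in Lemma~\ref{th:clarge}), so $p_l(c)=\theta_{2j-1}>0$ for some $c<c^*$, again contradicting the supremum. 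Consequently $p^*$ is a stable steady state below $p_u$ (possibly $p^*=0$), and $q(\cdot;c^*)$ monotonically connects $p_u$ to $p^*<p_u$, as claimed.

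The step I expect to be the main obstacle is precisely this last identification. At the threshold speed $c^*$ the orbit sits exactly on the boundary between escaping to $p=0$ and turning back, and one must show that only a connection to a \emph{stable} steady state is compatible with this boundary. The underlying mechanism is the jump of $f$ at a stable steady state, which acts as a one-sided barrier: the orbit may stop there with $q=0$, yet an arbitrarily small decrease of $c$ lets it pass into the region below, where $f>0$ forbids any turning and drives it downward. By contrast a transversal turning point and a nondegenerate unstable node both persist under perturbation of $c$ and are therefore incompatible with $c^*$ being a supremum. The delicate point is to make the monotonicity of Theorem~\ref{monotonicity} and the continuity estimates of Theorem~\ref{th:continuity} interact correctly across the discontinuities of $f$, so that the persistence arguments above are rigorous near the boundary speed.
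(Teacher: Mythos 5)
Your overall route is the same as the paper's: finiteness of $c^*$ from Lemmas~\ref{th:-clarge} and~\ref{th:clarge}, then $q(p^*;c^*)=0$ by a limiting/maximality argument, then exclusion of non-steady-state termination points by transversality plus persistence under a small change of $c$, and finally exclusion of unstable steady states by a phase-plane analysis of the equilibrium $(p^*,0)$ combined with the monotonicity and continuity theorems. The minor variations (e.g.\ treating $q(0;c^*)<0$ by perturbing $c$ upward rather than by the paper's decreasing sequence and Arzel\`a--Ascoli argument) are harmless.

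There is, however, one concrete gap in the last step. Your dichotomy ``$(\theta_{2j-1},0)$ is a node when $c^*\ge 2\sqrt{f'(\theta_{2j-1})}$ and a spiral otherwise'' tacitly assumes the equilibrium is \emph{stable} for the flow of \eqref{phaseplane1}--\eqref{phaseplane2}; the linearization has trace $-c^*$, so this requires $c^*>0$, which you never establish. If $c^*\le 0$ the point is an unstable node, an unstable spiral, or a center, your classification is literally false (e.g.\ $c^*\le -2\sqrt{f'}$ gives an unstable node, not a spiral), and the ``node case'' persistence argument — that the connection to $\theta_{2j-1}$ survives for nearby speeds $c<c^*$ — has no content, since a stable connection is what is being persisted. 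The paper closes this by first proving $c^*>0$: multiplying \eqref{eq:trajectory3} by $q$ and integrating from $p$ to $p_u$ gives $c^*\int_p^{p_u}q\,ds=\tfrac12 q(p;c^*)^2-\int_p^{p_u}f\,ds$, and since $q(p^*;c^*)=0$ while $f>0$ on $(p^*,p^*+\delta)$ (because $\theta_{2j-1}$ is unstable), the right-hand side is positive just above $p^*$, forcing $c^*>0$ because $\int_p^{p_u}q\,ds<0$. You should insert this computation before the node/spiral discussion; alternatively you would have to argue separately that no orbit with $q<0$ can converge forward in time to an unstable equilibrium or a center, which your text does not do. With that addition the argument matches the paper's.
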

\begin{proof}
	Notice that $c^*$ is a well-defined real number thanks to Lemmas~\ref{th:-clarge} and~\ref{th:clarge}. Let us now prove the theorem. For simplicity, we denote $p^* = p_l (c^*)$ in this proof.

Let us first show that $p^*$ is a steady state, i.e.
\begin{equation}\label{upper_wave_continuous}
\text{either} \ \ \ p^* \in \{ \theta_{2i} \, : \, i \in \{ 0 ,\cdots I \}  \} \ \  \text{ or } \ \  f ( p^* ) = 0.
\end{equation}
We recall here that, due to the discontinuities of the reaction $f$ at the stable steady states, it is not necessary to assume that $f (\theta_{2i}) = 0$ for $i =0,\cdots ,I$.

If \eqref{upper_wave_continuous} does not hold, then $p^* >0$ and also $f (p^*) < 0$. Indeed, it is clear from \eqref{eq:trajectory3} that $q (\cdot; c^*)$ cannot touch the horizontal axis at a point where $f$ is positive. Thus, the function~$f$ is Lipschitz-continuous on a neighborhood of $p^*$ and we can go back to and understand the ODE system~\eqref{phaseplane1}--\eqref{phaseplane2} with $c=c^*$ in the classical sense on some neighborhood of $X = (p^*, 0)$. In particular, the trajectory corresponding to $q (\cdot; c^*)$ enters the upper half plane $\{ q >0\}$ by passing through the point $X$. By continuity in the standard ODE theory, one can find $\delta >0$ and $\epsilon >0$ small enough such that any trajectory of~\eqref{phaseplane1}--\eqref{phaseplane2}, with $c= c^* -\delta$ and originating from $(p,q) \in B_\epsilon (X)$ the ball of radius $\epsilon$ centered at $X$, also crosses the horizontal axis. In terms of~\eqref{eq:trajectory3}, this provides a solution $\tilde{q}$ of \eqref{eq:trajectory3} with $c < c^*$ on some interval $(\tilde{p}, p^*+\epsilon_1)$ with $\epsilon_1 \in (0, \epsilon)$ such that $\tilde{q} (\tilde{p}) = 0$ and $\tilde{q} ( p^* + \epsilon_1 ) < q (p^* + \epsilon_1 ; c^*)$. 

Now take an increasing sequence $(c_n)_{n \in \mathbb{N}}$ such that $c_n \to c^*$ as $n \to \infty$. From statement~$(ii)$ of Theorem~\ref{monotonicity} and by our choice of $c^*$, we have that $p_l (c_n) = 0$ and $q (p_l (c_n);c_n) < 0$ for all $n \in \mathbb{N}$. By Theorem~\ref{th:continuity}, we also have that $q (p^* + \epsilon_1; c_n)$ converges to $q (p^* +\epsilon_1 ;c^*)$ as $n \to \infty$. Then, applying statement~$(i)$ of Theorem~\ref{monotonicity} to $q (\cdot; c_n)$ and $\tilde{q}$, we get for any~$n$ large enough that $q ( \cdot; c_n) \geq \tilde{q}$ and thus touches the horizontal axis. We have reached a contradiction and proved~\eqref{upper_wave_continuous}.\medskip

The next step is to show that
\begin{equation}\label{upper_wave_step2}
q (p^*; c^*) = 0.
\end{equation}
We take a decreasing sequence $(c_n)_{n \in \mathbb{N}}$ such that $c_n \to c^*$ as $n \to \infty$, and by our choice of~$c^*$, the function $q (\cdot; c_n)$ touches the horizontal axis at $p_l (c_n) \geq 0$ for any $n$. However, from Theorem~\ref{th:continuity}, we get as wanted that
$$\lim_{n \to \infty} q (p_l (c_n) ;c_n) = q (p^* ;c^*) = 0.$$

From \eqref{upper_wave_continuous} and \eqref{upper_wave_step2}, we now know that $p^*$ is a steady state and that $q (\cdot; c^*)$ defines a traveling wave with speed $c^*$ monotonically connecting $p_u$ and $p^*$. By construction $q(\cdot;c^*)$ is negative on the open interval $(p^*,p_u)$, thus this traveling wave is also connected in the sense of Definition~\ref{connectedcompact}. It only remains to check that $p^*$ is a stable steady state.

We proceed again by contradiction, and assume that $p^*$ is one of the unstable steady states $\theta_{2i+1}$ with $i \in \{0,\cdots,I-1 \}$. Let us first check that $c^* >0$. Multiplying \eqref{eq:trajectory3} by $q(\cdot; c^*)$ and integrating from $p \in [ p^*,p_u)$ to $p_u$, one obtains that
$$c^* \int_{p}^{p_u} q(s, c^*) ds =  \frac{ q(p , c^*)^2}{2} -  \int_{p}^{p_u} f(s) ds .$$
Since $p^*$ is an unstable steady state, it must be positive and thus $q (p^* ,c^*) =0$; moroever, by~\eqref{H1} the function $f$ is positive in an interval $(p^*, p^* + \delta)$ with $\delta >0$. It follows that the right hand term of the above equality is increasing on the same interval $(p^*,p^* + \delta)$. Due to the negativity of the function $q$ in $(p_l, p_u)$, there must hold that $c^* > 0$.

The argument is now the same as in the first step above. Due to \eqref{H2}, the function~$f$ is Lipschitz-continuous on a neighbordhood of $p^*$. Going back to the original ODE system, the function $(p,q (\cdot ; c^*))$ defines a solution of~\eqref{phaseplane1}-\eqref{phaseplane2} with $c= c^*$ which converges to $X = (p^*, 0)$ at $\infty$. Since $c^* >0$, the equilibrium point $X$ is also a stable (either node or spiral) point for any $c$ close enough to $c^*$. It follows from the standard ODE theory that there exist $\delta>0$ and $\epsilon >0$ small enough so that the solution of~\eqref{phaseplane1}--\eqref{phaseplane2} with $c =c^* - \delta$, starting from any $(p,q) \in B_\epsilon(X)$, also converges to the equilibrium point~$X$ while remaining in $B_\epsilon (X)$. This provides $\epsilon_1 \in (0, \epsilon)$ and a solution~$\tilde{q}$ of~\eqref{eq:trajectory3} with some $c < c^*$, on an interval $(\tilde{p}, p^* + \epsilon_1)$ with $\tilde{p} >0$, and such that $\tilde{q} (\tilde{p}) = 0$ and $\tilde{q} (p^* + \epsilon_1) < q (p^* + \epsilon_1; c^*)$. Notice that $\tilde{p}$ may or may not be equal to $p^*$, depending on whether we are in the spiral or node case.

Regardless, we consider an increasing sequence $(c_n)_{n \in \mathbb{N}}$ such that $c_n \to c^*$ as $n \to \infty$, as well as $p_l (c_n) = 0$ and $q (p_l (c_n);c_n) < 0$ for all $n \in \mathbb{N}$. By Theorem~\ref{th:continuity} and statement~$(i)$ of Theorem~\ref{monotonicity}, we find that $q (\cdot ;c_n) \geq \tilde{q}$ and thus it touches the horizontal axis for~$n$ large enough, a contradiction.
\end{proof}
		
We are almost ready to prove the existence of a propagating terrace. First, we choose $p_u = 1$ and obtain a traveling wave connecting $1$ to another stable steady state $p^* < 1$ by Theorem~\ref{th:upper_wave}. If $p^* =0$, then we have already found a propagating terrace connecting 1 and 0 which consists of a single front. When $p^* >0$, one may apply Theorem~\ref{th:upper_wave} again with $p_u = p^*$ to find a second traveling wave, and repeat the process until one reaches the lowest stable steady state 0. This obviously happens in a finite number of steps since there are finite number of stable steady states between $1$ and $0$. However, a key step is missing because the wave speeds of the obtained traveling waves a priori may be ordered incorrectly. Hence, this sequence is not a terrace yet. In order to fill this gap, we should clarify the cases with two or more traveling waves of the same speed $c\in\R$. The $p_l$ in Proposition \ref{prop:trajectory} is the first contact point $p_l$. If the strict inequality in \eqref{eqn3.6} is replaced with non-strict one, other contact points can be included. Note that, in Theorem \ref{th:continuity}, $p_{l,n}$ converges to $p_l$ only when $c_n$ decreases to $c\in\R$ as $n\to\infty$. If $c_n$ increases as $n\to\infty$, the convergence fails in general. In other words, if we consider $p_l$ as a function of wave speed $c$, it is right continuous, not left continuous. The final step to obtain the existence of a terrace is to understand the situation that $\lim_{n\to\infty}p_{l,n}\ne p_l$ when $c_n$ increases to $c$ as $n\to\infty$.

\begin{theo}[Continuity beyond $p_l$]\label{th:beyond}
Let $f$ satisfy \eqref{H1}--\eqref{H3} and $p_u \in \{ \theta_{2i} \}_{i=1, \cdots, I}$. Let $(c_n)_{n \in \mathbb{N}}$ be an increasing sequence such that $c_n \to c\in\R$ as $n \to \infty$. Let $(p_l,q)$ and $(p_{l,n},q_n)$ be the ones in Proposition~\ref{prop:trajectory} when wave speeds are $c$ and $c_n$, respectively. Suppose that $\lim_{n \to\infty} p_{l,n}\ne p_l$. Then the following two statements hold.\\
$(i)$ This situation is possible only when $p_l \in \{ \theta_{2i} \}_{i=1, \cdots, I-1}$, i.e., a positive stable steady state.\\
$(ii)$ The sequence $q_n$ converges to a continuous function $q_\infty$ which solves \eqref{eq:trajectory3} on some nontrivial intervals $(\widehat{p}_{j+1} ,\widehat{p}_{j})$, with $\widehat{p}_M =\lim_{n \to\infty} p_{l,n}$ and $\widehat{p}_0 = p_l $ for some $M \in \mathbb{N}^*$. Moreover, the $\widehat{p}_j \in \{ \theta_{2i} \}_{i=0, \cdots, I-1}$ are stable steady states and $q_\infty (\widehat{p}_j)=0$ for any $j=0,\cdots, M-1$. In particular, the restriction of $q_\infty$ to $[\widehat{p}_{j+1},\widehat{p}_{j}]$ is the solution given by Proposition~\ref{prop:trajectory} with $p_u = \widehat{p}_{j}$.
\end{theo}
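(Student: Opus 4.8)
The plan is to analyze the phase-plane trajectories $q_n$ as $c_n \nearrow c$ and show that the failure of $p_{l,n} \to p_l$ forces the limiting trajectory $q_\infty$ to vanish at one or more intermediate stable steady states. The key mechanism is that $q_\infty$ can touch the horizontal axis $q=0$ only at a point $\widehat p$ where $f$ does \emph{not} change sign unfavorably: from \eqref{eq:trajectory3}, a trajectory with $q<0$ can reach $q=0$ from below only where $f(\widehat p)\le 0$ on the left and $f\ge 0$ on the right, which by \eqref{H1} and the discontinuity hypothesis \eqref{H3} pins $\widehat p$ to a stable steady state $\theta_{2i}$. I would first establish monotone convergence of $q_n$, then identify the touching points, and finally invoke uniqueness from Proposition~\ref{prop:trajectory} to recognize each sub-trajectory.

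\textbf{Step 1: pointwise monotone limit and $q_\infty$.} By statement~$(ii)$ of Theorem~\ref{monotonicity}, the sequence $p_{l,n}$ is increasing (since $c_n$ increases) and $q_n(p) < q_{n+1}(p)$ wherever both are defined, with $q_n(p) \le q(p) < 0$ on $(p_l, p_u)$. Set $\widehat p_M := \lim_n p_{l,n} \ge p_l$, the assumption being that this inequality is strict. On any compact subinterval of $(\widehat p_M, p_u)$ avoiding a neighborhood of the eventual touching points, the $q_n$ are bounded away from $0$ and uniformly bounded below (by $-B$ as in the proof of Theorem~\ref{th:continuity}), so the right-hand side $-c_n - f/q_n$ of \eqref{eq:trajectory3} is uniformly bounded and the $q_n$ are uniformly Lipschitz there. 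Arzel\`a--Ascoli then gives a locally uniform limit $q_\infty$ which is continuous, nonpositive, and solves \eqref{eq:trajectory3} away from the discontinuity points of $f$ and away from the zeros of $q_\infty$.

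\textbf{Step 2: locating the zeros of $q_\infty$.} The crux is to show that the set where $q_\infty = 0$ inside $(\widehat p_M, p_u)$ consists of finitely many stable steady states, which I expect to be the \emph{main obstacle}. Suppose $q_\infty(\widehat p) = 0$ for some $\widehat p \in (\widehat p_M, p_u)$. Since $q_\infty \le 0$ and is continuous, $\widehat p$ is a local maximum of $q_\infty$, so the trajectory approaches the axis from the fourth quadrant on its right and must leave it downward on its left. Using the same energy/integration trick as in Lemma~\ref{th:clarge} --- multiplying \eqref{eq:trajectory3} by $q_n$ and integrating --- one controls $q_n^2$ and shows $q_\infty^2$ is Lipschitz near $\widehat p$ with $q_\infty^2(\widehat p)=0$. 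A trajectory with $q<0$ can only reach $q=0$ where the vector field permits it, and the sign conditions \eqref{H1} together with the jump condition \eqref{H3} (the left limit of $f$ positive, the right limit negative at each $\theta_{2i}$) force $\widehat p$ to be a stable steady state $\theta_{2i}$ with $i \in \{1,\dots,I-1\}$; at an unstable steady state or a point of smoothness the same contradiction argument used in Theorem~\ref{th:upper_wave} (constructing a subsolution trajectory with slightly smaller $c$ that still touches the $p$-axis) rules out a clean touch-and-leave. This simultaneously proves statement~$(i)$, since $\widehat p_M = \lim p_{l,n}$ is itself such a touching point and hence a positive stable steady state. Finiteness of $M$ follows because the stable steady states between $p_u$ and $0$ are finite in number.

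\textbf{Step 3: identification via uniqueness.} Order the zeros as $\widehat p_M < \dots < \widehat p_1 < \widehat p_0 = p_l < p_u$ (note $q_\infty(p_l)=q(p_l)=0$ as well, by the $c_n \nearrow c$ continuity failure being exactly the appearance of a new zero at $p_l$). On each open interval $(\widehat p_{j+1}, \widehat p_j)$ the limit $q_\infty$ is negative and solves \eqref{eq:trajectory3} with $q_\infty(\widehat p_j) = 0$, so it satisfies the defining properties of the solution furnished by Proposition~\ref{prop:trajectory} with $p_u$ replaced by $\widehat p_j$. By the uniqueness asserted in that proposition, the restriction of $q_\infty$ to $[\widehat p_{j+1}, \widehat p_j]$ \emph{is} that canonical solution, and its lower endpoint $\widehat p_{j+1}$ is a stable steady state with vanishing $q$ by the same sign analysis, closing the induction and yielding statement~$(ii)$.
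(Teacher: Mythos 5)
Your overall skeleton matches the paper's: monotone comparison in $c$ (Theorem~\ref{monotonicity}), the continuity results of Theorem~\ref{th:continuity}, the perturbation argument of Theorem~\ref{th:upper_wave} to force every touching point of the limiting trajectory to be a stable steady state, and the uniqueness in Proposition~\ref{prop:trajectory} to identify each piece. The one structural difference is the order of operations: the paper first constructs the canonical solution $\widehat{q}_1$ from Proposition~\ref{prop:trajectory} with $p_u$ replaced by $p_l$, proves $q_n \le \widehat{q}_1$ on $(\widehat{p}_1,p_l)$ by comparison, and then proves $q_n \to \widehat{q}_1$ via the integrating-factor identity (using $q_n(p_l)\to 0$ as the boundary input), iterating downward; you instead extract the monotone limit $q_\infty$ first and identify it a posteriori. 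Both routes can work, but yours creates an extra obligation the paper's avoids: you must rule out that $q_\infty$ vanishes on a whole interval, since your Step~2 tacitly treats every zero as isolated (``touch-and-leave''). This is fixable with the energy identity you already invoke --- if $q_\infty\equiv 0$ on $[p',p'']$, passing to the limit in $q_n^2(p'')-q_n^2(p')=\int_{p'}^{p''}(-2c_nq_n-2f)\,dp$ forces $\int f=0$ over every subinterval, contradicting \eqref{H1} --- but it must be said.

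Beyond that, three concrete slips. First, the inequality $\lim_n p_{l,n}\ge p_l$ in Step~1 is backwards: since $c_n\nearrow c$ and $p_l(\cdot)$ is nondecreasing in $c$ by Theorem~\ref{monotonicity}$(ii)$, one has $p_{l,n}\le p_l$, and the hypothesis means $\lim_n p_{l,n}<p_l$; your Step~3 ordering $\widehat{p}_M<\cdots<\widehat{p}_0=p_l$ is the correct one. Second, your sign heuristic for where a trajectory can touch and leave the axis is also reversed: one needs $f\ge 0$ just \emph{below} $\widehat{p}$ (so the trajectory can descend again) and $f\le 0$ just \emph{above} it (so it can rise to the axis), which by \eqref{H1} is the signature of a \emph{stable} state $\theta_{2i}$; as written, your condition would select the unstable states. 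Since the rigorous step is in any case the perturbation/comparison argument borrowed from Theorem~\ref{th:upper_wave} (which is exactly how the paper handles it, also without details), this does not sink the proof, but the heuristic as stated points to the wrong conclusion. Third, statement~$(i)$ concerns $p_l=\widehat{p}_0$, not $\widehat{p}_M=\lim_n p_{l,n}$: the latter need not be a touching point at all (it may equal $0$ with $q_\infty(0)<0$, which is precisely the case exploited in Theorem~\ref{exist}), so your sentence ``$\widehat{p}_M$ is itself such a touching point'' is both misplaced and false in general. Statement~$(i)$ follows instead by applying your Step~2 analysis to the zero at $p_l$ itself, where $q_\infty(p_l)=q(p_l)=0$ and $q_\infty<0$ on both sides.
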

\begin{proof}
First, we already know from Theorem~\ref{th:continuity} that $q_n$ converges locally uniformly to $q$ on the interval $(p_l,p_u)$. Since $p_l > \lim_{n \to\infty} p_{l,n} \geq 0$, one can proceed as in the proof of Theorem~\ref{th:upper_wave} to find that $p_l$ must be a stable steady state; we omit the details. In particular, one can apply Proposition~\ref{prop:trajectory} with $p_u$ replaced by $p_l$ to find a solution $\widehat{q}_0$ solving \eqref{eq:trajectory3} with speed $c$ on some interval $(\widehat{p}_1,p_l)$, together with $\widehat{q} (p_l) = 0$ and either $\widehat{q} (\widehat{p}_1) = 0$ or $\widehat{p}_1 =0$.

Next, it follows from Theorem~\ref{monotonicity} that $\lim_{n \to \infty} p_{l,n} \leq \widehat{p}_1$ and $q_n \leq \widehat{q}_1$ on $[\widehat{p}_1, p_l]$. Indeed, we have that $q_n (p_l) < 0 = \widehat{q}_1 (p_l)$ for any $n \in \mathbb{N}$. Thus, for any $n$ and any $\delta >0$ small enough (possibly depending on $n$), we have $q_n (p_l -\delta) < \widehat{q}_1 (p_l - \delta) < 0$. Then statement~$(i)$ of Theorem~\ref{monotonicity} applies and one finds that $q_n (p) < \widehat{q}_1 (p)$ for any $p \in (\widehat{p}_1, p_l - \delta)$, so that in particular $\widehat{p}_1 \geq \lim_{n \to \infty} p_{l,n}$. Since $\delta$ is arbitrarily small, we also infer that $q_n (p) < \widehat{q}_1 (p)$ for any $n \in \mathbb{N}$ and $p \in (\widehat{p}_1, p_l)$, hence $q_n (p) \leq \widehat{q}_1 (p)$ in the closed interval by continuity.

We now know that
$$q_0 \leq q_n \leq \widehat{q}_1 <0 \quad \mbox{ for } p \in (\widehat{p}_1,p_l),$$
and also, from Theorem~\ref{th:continuity} and the fact that $p_l>0$, that $\lim_{n \to \infty} q_n (p_l) = 0$. Then, proceeding again as in the proof of Theorem~\ref{th:continuity}, one can check that $q_n$ converges to $\widehat{q}_1$ locally uniformly in $(\widehat{p}_l, p_l]$ and that $\lim_{n \to \infty} q_n ( \widehat{p}_1) = \widehat{q}_1 (\widehat{p}_1)$.

If $\widehat{p}_1 = \lim_{n \to \infty} p_{l,n}$, then $M=1$ and Theorem~\ref{th:beyond} is already proved. In the other case when $\widehat{p}_1 > \lim_{n\to \infty} p_{l,n} \geq 0$, then one may check that $\widehat{p}_1$  is a stable steady state. We again omit the details since the argument is the same as in the proof of Theorem~\ref{th:upper_wave}. One can then reiterate the above argument and find that $q_n$ converges to the solution provided by Proposition~\ref{prop:trajectory} with $p_u$ replaced by $\widehat{p}_1$, on the interval $(\widehat{p}_2, \widehat{p}_1)$ where $\widehat{p}_2$ is the lower bound of the interval of definition of this solution. Again, $\widehat{p}_2$  is either equal to $\lim_{n \to \infty} p_{l,n}$ or it is a stable steady state. We reiterate until we reach $\widehat{p}_M = \lim_{n \to \infty} p_{l,n}$ for some integer~$M$, which happens in a finite number of steps because there are finitely many stable steady states. Theorem~\ref{th:beyond} is proved.
\end{proof}

We are ready to prove the existence part of Theorem \ref{th:exist}.
\begin{theo} \label{exist}
	 There exists at least one propagating terrace for \eqref{eq:rd} connecting 1 and 0, in the sense of Definition~\ref{terracedef}. Moreover, the terrace satisfies Theorem~\ref{th:exist} $(i)$ and $(ii)$.
\end{theo}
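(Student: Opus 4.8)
The plan is to build the terrace from the top, iterating Theorem~\ref{th:upper_wave} to produce the successive traveling waves while using Theorem~\ref{th:beyond} to enforce the speed ordering of Definition~\ref{terracedef}. For a stable steady state $p_u$, write $c^*(p_u)$ for the threshold speed defined in Theorem~\ref{th:upper_wave}. Starting from $p_u=1$, that theorem provides a monotone, connected wave $\phi_1$ connecting $1$ to a stable steady state $p^{(1)}<1$ with speed $c^*(1)$. If $p^{(1)}=0$ the terrace is a single front and we are done; so assume $p^{(1)}>0$.

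The naive continuation restarts Theorem~\ref{th:upper_wave} at $p_u=p^{(1)}$, but the new threshold $c^*(p^{(1)})$ could be strictly below $c^*(1)$, which would break the ordering. To prevent this I would first collect every wave that travels at the speed $c^*(1)$. Pick an increasing sequence $c_n\uparrow c^*(1)$; by the very definition of $c^*(1)$ each trajectory issued from $p_u=1$ at speed $c_n$ reaches the $q$-axis, so $p_{l,n}=0$ and $\lim_n p_{l,n}=0\neq p^{(1)}=p_l(c^*(1))$. Theorem~\ref{th:beyond} then applies and yields stable steady states $p^{(1)}=\widehat p_0>\widehat p_1>\cdots>\widehat p_M=0$ and a limit $q_\infty$ whose restriction to each $[\widehat p_{j+1},\widehat p_j]$ is the Proposition~\ref{prop:trajectory} trajectory of speed $c^*(1)$, with $q_\infty(\widehat p_j)=0$ for $j=0,\dots,M-1$. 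Consequently the pieces $\widehat p_0\to\widehat p_1,\dots,\widehat p_{M-2}\to\widehat p_{M-1}$ are genuine connected waves joining consecutive stable steady states at the common speed $c^*(1)$, and I would append them to $\phi_1$ as a single ``block'' of equal-speed fronts.

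It then remains to analyze the bottom piece of the stack, namely the restriction of $q_\infty$ to $[0,\widehat p_{M-1}]$, which is the Proposition~\ref{prop:trajectory} trajectory issued from $p_u=\widehat p_{M-1}$ at speed $c^*(1)$ and satisfies $q_\infty(0)\le 0$. If $q_\infty(0)=0$, this piece connects $\widehat p_{M-1}$ to $0$ and the terrace terminates with all speeds equal to $c^*(1)$. If instead $q_\infty(0)<0$, that trajectory overshoots the $p$-axis, so $c^*(1)$ lies in the set defining the threshold at $\widehat p_{M-1}$ and hence $c^*(1)<c^*(\widehat p_{M-1})$; I would then restart the whole procedure at $p_u=\widehat p_{M-1}$, whose uppermost wave has the strictly larger speed $c^*(\widehat p_{M-1})$, so the speeds stay nondecreasing across the junction. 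Since every restart lowers the starting platform to a strictly smaller stable steady state and there are only finitely many of these, the construction terminates and reaches $0$ in finitely many steps.

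The fronts produced this way connect $1$ to $0$ through stable-steady-state platforms, and their speeds are nondecreasing (constant inside each block and strictly increasing at each restart), so they form a propagating terrace in the sense of Definition~\ref{terracedef}; this proves existence. Property~$(i)$ is immediate since all platforms are stable steady states, and the monotonicity and connectedness in~$(ii)$ come from Theorems~\ref{th:upper_wave} and~\ref{th:beyond}. Compactness in~$(ii)$ follows from the jump condition~\eqref{H3}: near a platform $\theta_{2i}$ the wave equation $\phi''=-c\phi'-f(\phi)$ forces $\phi$ to leave (resp. meet) the plateau with a nonzero one-sided second derivative, and the phase-plane estimate $q\sim -\sqrt{2|f(\theta_{2i}^{\pm})|\,|p-\theta_{2i}|}$ shows the plateau is attained at a finite value of $z$, so $\spt(\phi')$ is bounded. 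The main obstacle throughout is exactly this speed ordering: without Theorem~\ref{th:beyond} the naive iteration could generate decreasing speeds, and its role is to reveal that any such failure is in fact several fronts sharing one speed, which must be inserted together as a block before the speed is allowed to increase.
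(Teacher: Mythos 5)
Your proposal is correct and follows essentially the same route as the paper: iterate Theorem~\ref{th:upper_wave} from $p_u=1$, invoke Theorem~\ref{th:beyond} along an increasing sequence $c_n\uparrow c^*$ to extract the block of equal-speed fronts down to $\widehat p_{M-1}$, deduce the strict speed increase at each restart from $q_\infty(0)<0$, and terminate by finiteness of the set of stable steady states. The only cosmetic difference is in the compactness argument for $(ii)$, where you use the quantitative phase-plane estimate $q\sim-\sqrt{2|f(\theta_{2i}^{\pm})|\,|p-\theta_{2i}|}$ while the paper argues by nonexistence of a solution converging to a non-zero of a $C^1$ extension of $f$; both rest on the same jump condition \eqref{H3}.
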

\begin{proof}
We first construct the terrace by iteration. We denote, for any $c \in \R$, by $q (\cdot; c)$ the solution from Proposition~\ref{prop:trajectory} with $p_u =1$, and by $p_l (c)$ the lower bound of its interval of definition. According to Theorem~\ref{th:upper_wave}, we have that $q (\cdot ; c_1^*)$ defines a connected traveling wave, monotonically connecting~1 and some stable steady state~$p_l (c_1^*)$ with speed
$$c_1^* = \sup\left\{ c \ | \  p_l (c) = 0 \right\}.$$
If $p_l (c_1^*) = 0$ then this is also a propagating terrace consisting of only one traveling wave. Now consider the case when $p_l (c_1^*)>0$.

We take an increasing sequence $(c_n)_{n \in \mathbb{N}}$ such that $c_n \to c^*_1$ as $n \to \infty$. From the definition of $c^*_1$ we have that $\lim_{n \to \infty}  p_l (c_n) = 0 < p_l (c_1^*)$. Therefore we can apply Theorem~\ref{th:beyond} and we get that $q (\cdot; c_n)$ converges to a function $q_\infty$ which solves~\eqref{eq:trajectory3} with speed $c= c_1^*$ on every subintervals $(\widehat{p}_{j+1} ,\widehat{p}_{j})$, with $0 = \widehat{p}_M < \cdots < \widehat{p}_0 = p_l (c_1^*) $ for some $M \in \mathbb{N}^*$. Moreover, the $\widehat{p}_j \in \{ \theta_{2i} \}_{i=0, \cdots, I-1}$ are stable steady states and $q_\infty (\widehat{p}_j)=0$ for any $j=0,\cdots, M-1$. If $q_\infty (0)$ is also equal to 0, then this defines a finite sequence of connected traveling fronts, monotonically connecting $\widehat{p}_j$ and $\widehat{p}_{j+1}$, with same speed $c_1^*$. In such a case, we have also found a propagating terrace connecting 1 and 0.

If instead $q_\infty (0) <0$, then we have obtained a propagating terrace connecting 1 and the positive stable steady state $\widehat{p}_{M-1}$, whose fronts all have the same speed $c^*_1$. 
In order to find the next traveling wave of the propagating terrace, we take $p_u = \widehat{p}_{M-1}$ in Theorem~\ref{th:upper_wave} and get a traveling wave with speed $c_2^*$ connecting $\widehat{p}_{M-1}$ and some lower stable steady state~$p_l (c_2^*)$. It is given by $\tilde{q} (\cdot ; c_2^*)$ where $\tilde{q} (\cdot; c)$ is the solution from Proposition~\ref{prop:trajectory} with $p_u = \widehat{p}_{M-1}$, defined on an interval $(\tilde{p}_l (c), \widehat{p}_{M-1})$ and satisfying $\tilde{q} (\widehat{p}_{M-1};c)=0$ together with either $\tilde{q} ( \tilde{p}_l (c) ; c) = 0$ or $\tilde{p}_l (c)=0$. Moreover,
$$c_2^* := \sup\left\{ c \ | \  \tilde{p}_l (c) = 0 \mbox{ and } \tilde{q} ({p}_l (c);c) < 0\right\} .$$
Now recall from Theorem~\ref{th:beyond} that $q_\infty (0) <0$ and that $q_\infty$ coincides with $\tilde{q} (\cdot; c_1^*)$ on $(0, \widehat{p}_{M-1})$. Therefore
$$c_2^* >  c_1^* .$$
Putting the propagating terrace connecting $1$ and $\widehat{p}_{M-1}$ (with speed $c_1^*$) together with the traveling wave connecting $\widehat{p}_{M-1}$ and $\tilde{p}_l (c_2^*)$ (with speed $c_2^*$), we obtain a propagating terrace connecting~$1$ and $\tilde{p}_l (c_2^*)$.

If again~$\tilde{p}_l (c_2^*)>0$, then one can reiterate the above argument until reaching $0$. This iteration ends in a finite number of steps since there is only finitely many stable steady states. One finally obtains a propagating terrace connecting 1 and 0.\medskip

It now remains to check that this propagating terrace satisfies statements~$(i)$ and $(ii)$ of Theorem~\ref{th:exist}. By construction, the traveling waves of the propagating terrace are associated with negative solutions of \eqref{eq:trajectory3}, and therefore they are decreasing and connected in the sense of Definition~\ref{connectedcompact}. Moreover, we have already established that all the platforms are stable steady states.

Finally, these traveling waves are compact in the sense of Definition~\ref{connectedcompact}. This follows from the fact that the platforms are stable steady states, at which the reaction function~$f$ is discontinuous. Indeed, consider any traveling wave $\phi$ monotonically connecting two stable steady states $\theta_i$ and $\theta_j$ with $i >j$. Let us consider the right limit and prove that there must exist $Z < + \infty$ such that $\phi (Z) = \theta_j$ (the left side can be handled by a symmetrical argument). If there does not exist such a finite $Z$, then $\phi > \theta_j$ on the whole real line, and in particular it solves
$$\phi '' + c \phi' + \tilde{f} (\phi )= 0,$$
on a right half-line, together with
$$\lim_{z \to \infty} \phi (z) = \theta_,$$
where $\tilde{f}$ a $C^1$-function which coincides with $f$ on some interval $(\theta_j, \theta_j +\delta]$ with $\delta >0$ small. This is impossible because $\theta_j$ is not a zero of $\tilde{f}$ and thus such a solution does not exist. This concludes the proof of Theorem~\ref{th:exist}.
\end{proof}

We briefly highlight the fact that we have constructed a propagating terrace in the sense of Definition~\ref{terracedef}. As we pointed out in the introduction, the existence of a terrace solution in the sense of Definition~\ref{terracesolution} also follows thanks to the fact that this propagating terrace consists of compact traveling waves.

\section{Uniqueness of terrace}\label{sec:unique}

We have constructed a propagating terrace in Section~\ref{sec:exist}. In this section, we show the uniqueness of a terrace and complete the proof of Theorem \ref{th:exist}. The argument relies on the properties of solutions of \eqref{eq:trajectory3}. First, notice that, according to Definitions~\ref{def:twsol} and~\ref{connectedcompact}, if $\phi$ is a connected traveling wave monotonically connecting two steady states $b  > a$, then one can use the change of variable
$$p = \phi_i^{-1} (z)$$ to get a function $q$
solving
	$$ {dq \over dp} = -c - {f(p) \over q},$$
that is \eqref{eq:trajectory3}, in the inverval $(a,b)$ together with
$$q (a) = q (b) = 0.$$
For convenience, we will refer to this function $q$ as the trajectory of the traveling wave~$\phi$, which is consistent with the fact that the curve of the function $q$ is indeed the trajectory of the solution~$\phi$ in the phase plane of the ODE~\eqref{TWeqn}

In particular, we may rewrite some of the results in Section~\ref{sec:exist} in terms of the traveling waves, which is the purpose of the next two lemmas:
\begin{lem}\label{rewrite1} Let $b$ be a stable steady state. Let also $\phi_1$ and $\phi_2$ be two traveling waves monotonically connecting respectively $b$ and $a_1$ with speed $c_1$, $b$ and $a_2$ with speed $c_2$.
\begin{enumerate}[$(i)$]
\item If $c_1 \geq  c_2$, then $a_1 \geq a_2$.
\item If
$$c_2 = c^* =  \sup\left\{ c \ | \  p_l (c) = 0 \mbox{ and } q (p_l (c);c) < 0\right\},$$
then $c_1 \geq c_2$ and $a_1 \geq a_2$. Here $q (p;c)$ is the solution of \eqref{eq:trajectory3} from Proposition~\ref{prop:trajectory} with $p_u = b$, and $p_l (c)$ the lower bound of its interval of definition.
\end{enumerate}
\end{lem}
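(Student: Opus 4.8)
The plan is to translate both statements into the language of the trajectory functions $q$ solving \eqref{eq:trajectory3} and then invoke the monotonicity result of Theorem~\ref{monotonicity} directly. The key observation is that the trajectory of $\phi_k$ (for $k=1,2$) is exactly the solution of \eqref{eq:trajectory3} from Proposition~\ref{prop:trajectory} with $p_u=b$ and speed $c_k$: since $\phi_k$ monotonically connects $b$ and $a_k$ and $b$ is a stable steady state, the change of variable $p=\phi_k^{-1}(z)$ produces a negative function $q_k$ on $(a_k,b)$ with $q_k(b)=q_k(a_k)=0$. By the uniqueness part of Proposition~\ref{prop:trajectory}, $q_k$ is \emph{the} solution given there, and the left endpoint $a_k$ must coincide with $p_l(c_k)$.

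For statement $(i)$, I would argue by contradiction. Suppose $c_1\ge c_2$ but $a_1<a_2$. By statement $(ii)$ of Theorem~\ref{monotonicity} applied to the pair $(q_1,q_2)$ with speeds $(c_1,c_2)$, the map $c\mapsto p_l(c)$ is increasing, so $c_1\ge c_2$ forces $p_l(c_1)\ge p_l(c_2)$, i.e. $a_1\ge a_2$, which is the desired conclusion. I should be slightly careful with the case $c_1=c_2$: here uniqueness in Proposition~\ref{prop:trajectory} gives $q_1=q_2$ and hence $a_1=a_2$ outright, so the inequality is in fact an equality. The only substantive content is therefore the strict case $c_1>c_2$, which is handled verbatim by Theorem~\ref{monotonicity}$(ii)$.

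For statement $(ii)$, the hypothesis is that $c_2$ equals the critical speed $c^*$ defined as the supremum of those $c$ for which the trajectory reaches the $q$-axis (i.e. $p_l(c)=0$ with $q(p_l(c);c)<0$) before the $p$-axis. Recall from the construction in Theorem~\ref{th:upper_wave} that at $c=c^*$ the trajectory $q(\cdot;c^*)$ first touches the horizontal $p$-axis at a stable steady state $a_2=p_l(c^*)>0$. The point is then that $\phi_1$, connecting $b$ to $a_1$, cannot have speed strictly below $c^*$: if $c_1<c^*=c_2$, then by the definition of the supremum there is a speed $c$ with $c_1<c\le c^*$ for which the trajectory still reaches the $q$-axis, so by Theorem~\ref{monotonicity} (monotonicity of $p_l$ in $c$) the trajectory at speed $c_1$ a fortiori reaches the $q$-axis, giving $p_l(c_1)=0$; but $\phi_1$ is a traveling wave connecting $b$ to the steady state $a_1$, which forces $q_1(a_1)=0$ with $a_1$ a genuine steady state, contradicting $p_l(c_1)=0$ together with $q(0;c_1)<0$. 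Hence $c_1\ge c_2$, and then $a_1\ge a_2$ follows from part~$(i)$.

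The main obstacle is bookkeeping rather than mathematical depth: one must be careful to identify the trajectory of each $\phi_k$ with the unique solution of Proposition~\ref{prop:trajectory} (so that Theorem~\ref{monotonicity} is applicable and $a_k=p_l(c_k)$), and in part $(ii)$ to handle correctly the distinction between the trajectory terminating on the $q$-axis versus on the $p$-axis. Once these identifications are in place, both statements reduce to the already-established monotonicity of $p_l$ with respect to $c$ and the definition of $c^*$.
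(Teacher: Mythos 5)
Your proposal is correct and follows essentially the same route as the paper: identify each $\phi_k$'s trajectory with the unique solution of Proposition~\ref{prop:trajectory} (so $a_k=p_l(c_k)$), deduce $(i)$ from the uniqueness statement when $c_1=c_2$ and from Theorem~\ref{monotonicity}$(ii)$ when $c_1>c_2$, and deduce $(ii)$ from the fact that for $c<c^*$ the trajectory crosses the vertical axis strictly below the origin and hence cannot be the trajectory of a traveling wave connecting steady states. No gaps.
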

\begin{proof}
When $c_1 = c_2$, statement~$(i)$ simply follows from the uniqueness of the solution in Proposition~\ref{prop:trajectory}, which insures that $\phi_1$ and $\phi_2$ have the same trajectory $q$, thus they must coincide up to a shift. When $c_1 > c_2$, then it instead follows from Proposition~\ref{prop:trajectory} and Theorem~\ref{monotonicity}, noticing that $a_1$ and $a_2$ must coincide with $p^1_l$ and $p^2_l$. Statement $(ii)$ is also a consequence of Theorem~\ref{monotonicity}, which insures that the solution from Proposition~\ref{prop:trajectory} always crosses the vertical axis below the origin when $c < c^*$, and thus it cannot be the trajectory of a traveling wave monotonically connecting steady states.
\end{proof}
\begin{lem}\label{rewrite2} Let $\phi_1$ and $\phi_2$ be two traveling waves monotonically connecting respectively $b_1$ and $a_1$ with speed $c_1$, $b_2$ and $a_2$ with speed $c_2$. Denote also by $q_1$ and $q_2$ their respective trajectories.

If moreover $c_1 \geq  c_2$ and $b_1 \in (a_2, b_2)$, then $a_1 \geq a_2$ and $q_1 > q_2$ in the interval $(a_1, b_1)$.
\end{lem}
\begin{proof}
This follows from applying statement~$(i)$ of Theorem~\ref{monotonicity} to the trajectories $q_1$ and $q_2$ on the interval $( \max \{ a_1 , a_2\} , b_1 - \delta)$ with $\delta >0$ arbitrarily small. Notice indeed that $q_1 (b _1) = 0 > q_2 (b_1)$ so that the hypotheses of statement~$(i)$ of Theorem~\ref{monotonicity} are satisfied for any small enough~$\delta$. We get that $q_2 < q_1$ on the interval $( \max \{ a_1 , a_2\} , b_1 )$, which together with the facts that $q_1$ is negative on $(a_1,b_1)$ and $q_2 (a_2) = 0$  in turn insures that $\max \{ a_1, a_2 \} = a_1$.
\end{proof}
We will also need the next lemma, which as a matter of fact is a byproduct of the proof of Theorem~\ref{monotonicity}.
\begin{lem}\label{speedunique}
	Let $\phi_1$ and $\phi_2$ be two connected traveling waves monotonically connecting two stable steady states $b>a$, respectively with speeds $c_1$ and $c_2$.

Then $c_1 = c_2$ and $\phi_1 \equiv \phi_2 (\cdot + Z)$ for some shift $Z \in \mathbb{R}$.
\end{lem}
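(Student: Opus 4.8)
The plan is to pass to the phase-plane trajectories and show that two traveling waves sharing the same endpoints cannot have different speeds; once the speeds agree, uniqueness is immediate. First I would use the change of variable $p = \phi_i^{-1}(z)$ described above to replace each $\phi_i$ by its trajectory $q_i$, which solves \eqref{eq:trajectory3} on $(a,b)$ with speed $c_i$, is negative on $(a,b)$, and satisfies $q_i(a) = q_i(b) = 0$ (both endpoints are stable steady states, so the trajectory touches the horizontal axis there). It then suffices to prove $c_1 = c_2$: if the speeds coincide, Proposition~\ref{prop:trajectory} applied with $p_u = b$ forces the trajectory to be unique, so $q_1 \equiv q_2$; and since $\phi_1$ and $\phi_2$ then solve the same autonomous equation \eqref{TWeqn} with the same speed and the same orbit, they must agree up to a translation $Z$. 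Accordingly I would argue by contradiction and assume, without loss of generality, that $c_1 > c_2$.

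Second, I would recycle the weighted-difference computation from the proof of Theorem~\ref{monotonicity}. Setting $w = q_1 - q_2$ on $(a,b)$, one has
$$\frac{dw}{dp} = -(c_1 - c_2) + \frac{f(p)}{q_1 q_2}\, w,$$
and with the integrating factor $\gamma(p) = \exp\!\left(-\int_{p_0}^p \frac{f(s)}{q_1(s) q_2(s)}\, ds\right)$ this becomes $\frac{d(\gamma w)}{dp} = -(c_1 - c_2)\,\gamma$. Near the upper endpoint $b$, the argument already carried out in Theorem~\ref{monotonicity} shows that $\gamma$ extends continuously and remains bounded and positive; integrating from $p$ to $b$ and using $w(b)=0$ then yields the identity
$$\gamma(p)\, w(p) = (c_1 - c_2) \int_p^b \gamma(s)\, ds, \qquad p \in (a,b).$$

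Third -- and this is the genuinely new point compared with Theorem~\ref{monotonicity} -- I would analyze $\gamma$ at the \emph{lower} endpoint $a$, where the trajectory now also reaches $q=0$. Here the discontinuity hypothesis \eqref{H3} is essential: since $\lim_{u\to a^+} f(u) < 0$ is nonzero, multiplying \eqref{eq:trajectory3} by $q_i$ gives $\frac12\frac{d q_i^2}{dp} = -c_i q_i - f(p) \to -\lim_{u\to a^+} f(u) > 0$ as $p \to a^+$, so each $q_i^2$ vanishes linearly and $q_1(s)q_2(s)$ behaves like a positive multiple of $(s-a)$. Consequently $\frac{f(s)}{q_1 q_2} \sim -\frac{1}{2(s-a)}$ near $a$, the integral $\int_{p_0}^p \frac{f(s)}{q_1 q_2}\,ds$ diverges to $+\infty$ as $p\to a^+$, and hence $\gamma(p)\to 0$; in particular $\gamma$ is bounded on $[a,b]$ and $\int_a^b \gamma > 0$ is finite. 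Letting $p\to a^+$ in the displayed identity, the left-hand side tends to $0$ (because $\gamma$ is bounded and $w(a)=0$) while the right-hand side tends to $(c_1-c_2)\int_a^b \gamma > 0$, a contradiction. This forces $c_1 = c_2$, and the conclusion follows as in the first step.

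I expect the main obstacle to be precisely this endpoint analysis at $a$: in Theorem~\ref{monotonicity} only the endpoint $p_u=b$ had to be controlled while the lower endpoint was a free interior point, whereas here both endpoints pin $q=0$, so the naive monotonicity comparison degenerates and one must quantify the square-root vanishing of $q_i$ at the stable steady state -- a feature coming directly from \eqref{H3} -- in order to keep $\gamma$ integrable. The remaining ingredients, namely the integrating-factor identity and the final appeal to the uniqueness in Proposition~\ref{prop:trajectory}, are routine once this endpoint behavior is established.
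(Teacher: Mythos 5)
Your proposal is correct and follows essentially the same route as the paper: pass to the phase-plane trajectories, set $w = q_1 - q_2$, multiply by the integrating factor $\gamma$, and exploit that $\gamma w$ vanishes at both endpoints while $\gamma > 0$ to force $c_1 = c_2$, then invoke the uniqueness in Proposition~\ref{prop:trajectory}. The paper concludes by Rolle's theorem rather than your limit $p \to a^+$, and it controls $\gamma$ near $a$ merely via the sign of $f$ there (monotonicity, hence boundedness, of $\gamma$) instead of your finer square-root asymptotics of $q_i$, but these are only cosmetic differences.
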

\begin{proof}
	As explained above, according to Definitions~\ref{def:twsol} and~\ref{connectedcompact}, the functions $\phi_1$ and $\phi_2$ are invertible respectively in the supports of $\phi_1 ' $ and $\phi_2 '$. Using the change of variables $p = \phi_i^{-1} (z)$ to rewrite the ODEs satisfied by these traveling waves, one finds functions $q_1$ and~$q_2$ solving respectively
	$$ {dq_1 \over dp} = -c_1 - {f(p) \over q_1},$$
	$$ {dq_2 \over dp} = -c_2 - {f(p) \over q_2},$$
in the sense of Definition~\ref{defi:trajectory} on the interval $(a,b)$, together with
		$$q_1 (a) = q_2 (b) = 0.$$
	As in the proof of Theorem~\ref{monotonicity}, we define $w=q_1 - q_2$ and subtract the above two equations. Then, we get
	\begin{equation}\label{eq:ww}
	{dw \over dp} - {f(p) \over q_1 q_2} w = -(c_1 - c_2).
	\end{equation}
	Fix $p_0 \in (a, b)$ and for $p \in [a, b]$, define the integrating factor $\gamma (p)$ as
	\begin{equation*}
	\label{integrating factor}
	\gamma (p) := \exp\left( - \int_{p_0}^p {f(s) \over q_1(s)q_2(s)} ds \right).
	\end{equation*}
	Note that it is well-defined and continuous on $(a,b)$. It is also decreasing with respect to $p$ on a left neighborhood of $b$, and increasing in a right neighborhood of $a$, due \eqref{H1} and the fact that $a,b \in \{ \theta_{2i} \}_{0 \leq i \leq I}$ the set of stable steady states. In particular $\gamma$ is bounded on the close interval $[a,b]$.

Next, multiplying~\eqref{eq:ww} by $\gamma$, one gets that
	\begin{equation}\label{eq:gammagamma}
	{d(\gamma w) \over dp} = -(c_1 - c_2)\gamma.
	\end{equation}
Now recall that $(\gamma w)(a) = (\gamma w)(b) = 0$ by assumption, thus by Rolle's theorem there exists $p_1 \in (a,b)$ such that ${d(\gamma w) \over dp}(p_1) = 0$. Since $\gamma (p_1) > 0$, we get from \eqref{eq:gammagamma} that $c_1 = c_2$. Finally, it follows from Proposition~\ref{prop:trajectory} that $q_1 \equiv q_2$, which in turn implies that $\phi_1$ and $\phi_2$ are identical up to some shift.
\end{proof}
We are now in a position to prove the uniqueness of the propagating terrace. Hereafter we denote by $\mathcal{T}$ the propagating terrace constructed in Section~\ref{sec:exist}, by $(\phi_j)_{1 \leq j \leq J}$ the corresponding sequence of connected traveling waves with speeds $(c_j)_{1 \leq j \leq J}$, and by $(p_j)_{1 \leq j \leq J}$ its platforms, such that
$$1 = p_0 > p_ 1 > \cdots > p_J = 0.$$
We also let $\widehat{\mathcal{T}}$ denote another propagating terrace, $(\widehat{\phi}_j)_{1 \leq j \leq \widehat{J}}$ be the corresponding traveling waves with speeds $(\widehat{c}_j)_{1 \leq j \leq \widehat{J}}$, and $(\widehat{p}_j)_{0 \leq j \leq \widehat{J}}$ be its platforms.

Our goal is now to show that $\widehat{\mathcal{T}}$ actually coincides with $\mathcal{T}$, i.e. that they have the same platforms and that both families of traveling waves coincide up to some shifts.

\begin{prop} \label{uniqueness1}
The set of platforms of $\mathcal{T}$ is included in the set of platforms of $\widehat{\mathcal{T}}$.

In particular, for any $j \in \{1 , \cdots, J \}$, there exists $\widehat{j} \in \{ 1, \cdots , \widehat{J} \}$ such that the traveling wave~$\widehat{\phi}_j$ connects $p_{j-1} = p_{\widehat{j}-1}$ and $p_{\widehat{j}}$. Furthermore we have that $\widehat{c}_{\widehat{j}} \geq c_{j}$.
\end{prop}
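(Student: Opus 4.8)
The plan is to argue by induction on $j$, descending from the common top platform $p_0 = \widehat{p}_0 = 1$, and to prove at each stage the two assertions together: that $p_j$ is a platform of $\widehat{\mathcal{T}}$, and that the wave of $\widehat{\mathcal{T}}$ issued downward from $p_{j-1}$ has speed at least $c_j$. The toolkit is the set of reformulated comparison statements, Lemmas~\ref{rewrite1} and~\ref{rewrite2}, together with the monotonicity of $p_l$ and of the trajectory in the speed $c$ from Theorem~\ref{monotonicity}. The base case is immediate, since both terraces connect $1$ and $0$ and so have top platform $1$.

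For the speed estimate, assume inductively that $p_0, \dots, p_{j-1}$ are all platforms of $\widehat{\mathcal{T}}$ and that the corresponding speed bounds hold for the earlier steps; write $p_{j-1} = \widehat{p}_{\widehat{j}-1}$ and let $\widehat{\phi}_{\widehat{j}}$, of speed $\widehat{c}_{\widehat{j}}$, be the wave of $\widehat{\mathcal{T}}$ with top $p_{j-1}$. I would distinguish two cases according to the block structure of $\mathcal{T}$. If $c_j > c_{j-1}$, then $\phi_j$ is the uppermost wave produced by the application of Theorem~\ref{th:upper_wave} at $p_u = p_{j-1}$, so $c_j$ is exactly the minimal speed $c^*$ of a wave issued from $p_{j-1}$, and statement~$(ii)$ of Lemma~\ref{rewrite1} applies directly to give $\widehat{c}_{\widehat{j}} \geq c_j$. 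If instead $c_j = c_{j-1}$ (so $\phi_j$ sits inside a group of waves of a common speed), I would not use minimality — a wave interior to such a group need not have minimal speed among the waves issued from its own top — but rather the induction hypothesis, which provides that the wave of $\widehat{\mathcal{T}}$ issued from $p_{j-2}$ already has speed at least $c_{j-1}$; since that wave precedes $\widehat{\phi}_{\widehat{j}}$ in $\widehat{\mathcal{T}}$, the nondecreasing ordering of the speeds of a terrace gives $\widehat{c}_{\widehat{j}} \geq c_{j-1} = c_j$.

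It remains to deduce that $p_j$ itself is a platform of $\widehat{\mathcal{T}}$. Comparing $\phi_j$ (connecting $p_{j-1}$ to $p_j$, speed $c_j$) with $\widehat{\phi}_{\widehat{j}}$, which is issued from the same point $p_{j-1}$ at the larger speed $\widehat{c}_{\widehat{j}} \geq c_j$, the monotonicity of $p_l$ in $c$ from Theorem~\ref{monotonicity}$(ii)$ forces the lower endpoint of $\widehat{\phi}_{\widehat{j}}$ to satisfy $\widehat{p}_{\widehat{j}} \geq p_j$. If $\widehat{p}_{\widehat{j}} = p_j$ we are done. Otherwise $\widehat{\mathcal{T}}$ has one or more platforms strictly inside $(p_j, p_{j-1})$, and I would run a descending comparison against $\phi_j$: whenever a platform of $\widehat{\mathcal{T}}$ lies in $(p_j, p_{j-1})$, the wave issued from it has speed $\geq \widehat{c}_{\widehat{j}} \geq c_j$ (again by the speed ordering) and top strictly inside $(p_j, p_{j-1})$, so Lemma~\ref{rewrite2} applied to this wave and $\phi_j$ forces its lower endpoint to remain $\geq p_j$. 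Hence the successive platforms of $\widehat{\mathcal{T}}$ below $p_{j-1}$ cannot drop below $p_j$; being strictly decreasing while $\widehat{\mathcal{T}}$ must ultimately reach $0 < p_j$, the first of them that is $\leq p_j$ must equal $p_j$. This closes the induction and yields both the platform inclusion and the speed bound $\widehat{c}_{\widehat{j}} \geq c_j$.

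I expect the insertion scenario of the third paragraph to be the only delicate point. The constructed terrace is guaranteed to use a minimal speed only at the top of each group of equal-speed waves, so one cannot simply invoke the minimality of $c_j$ at every step; the device that replaces it is the combination of the trajectory domination of Lemma~\ref{rewrite2} with the monotone speed ordering of $\widehat{\mathcal{T}}$, which together prevent $\widehat{\mathcal{T}}$ from sliding strictly below $p_j$ before it reaches it. Verifying that this descending comparison iterates correctly, keeping track of which trajectory dominates, and that the finiteness of the set of stable steady states forces it to terminate exactly at $p_j$, is where I would concentrate the care.
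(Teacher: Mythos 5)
Your proof is correct and takes essentially the same route as the paper's: the speed bound via the minimality characterization in Lemma~\ref{rewrite1}$(ii)$, followed by a descending comparison of the platforms of $\widehat{\mathcal{T}}$ against $\phi_j$ using Lemma~\ref{rewrite2} and the nondecreasing speed ordering to force a platform of $\widehat{\mathcal{T}}$ to land exactly on $p_j$. The paper writes out only the case $j=1$ and asserts the rest ``follows by iteration''; your case distinction between $c_j > c_{j-1}$ (where $c_j$ really is the minimal speed from $p_{j-1}$ and Lemma~\ref{rewrite1}$(ii)$ applies) and $c_j = c_{j-1}$ (where you instead combine the induction hypothesis with the speed ordering of $\widehat{\mathcal{T}}$, since an interior wave of an equal-speed block need not have minimal speed from its own top) correctly supplies a detail that the paper leaves implicit.
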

\begin{proof}
	We only show that $p_1$ the uppermost platform (excluding $1$) of $\mathcal{T}$ belongs to the set of platforms of $\widehat{\mathcal{T}}$, and that $\widehat{c}_1 \geq c_1$. The result then follows by iteration.

Recall that $\phi_1$ connects $1$ and $p_1$ with speed $c_1$, and $\widehat{\phi}_1$ connects $1$ and $\widehat{p}_1$ with speed~$\widehat{c}_1$. Furthermore, by construction (see Theorems~\ref{th:upper_wave} and~\ref{exist}) and thanks to statement~$(ii)$ of Lemma~\ref{rewrite1}, we have that
$$\widehat{c}_1 \geq c_1, \qquad \widehat{p}_1 \geq p_1.$$
If $p_1 = \widehat{p}_1 $, then it is platform of $\mathcal{T}$. This happens in particular when $c_1 = \widehat{c}_1$, as one may check by applying twice the statement~$(i)$ of Lemma~\ref{rewrite1}. So consider the remaining case when $c_1 <  \widehat{c}_1 $ and $p_1 < \widehat{p}_1 $. Then the second traveling of $\widehat{\mathcal{T}}$ monotonically connects $\widehat{p}_1 \in (p_1, p_0)$ and $\widehat{p}_2$ with some speed $$\widehat{c}_2 \geq \widehat{c}_1 > c_1.$$
Applying Lemma~\ref{rewrite2}, one deduces that $\widehat{p}_2 \geq p_1$. Reiterating and since there is a finite number of steps, we end up proving that there exists some integer $j$ such that $$\widehat{p}_j = p_1.$$
In particular, $p_1$ is a platform of the propagating terrace $\widehat{\mathcal{T}}$. This concludes the proof.
\end{proof}
Let us now prove that actually the terrace $\widehat{\mathcal{T}}$ cannot have more platforms than $\mathcal{T}$:
\begin{prop} \label{uniqueness2}
The propagating terraces $\mathcal{T}$ and $\widehat{\mathcal{T}}$ share the same set of platforms, i.e. $J = \widehat{J}$ and $p_j = \widehat{p}_j$ for any $1 \leq j \leq J$.
\end{prop}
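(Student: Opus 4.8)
The plan is to argue by contradiction, exploiting the non-decreasing ordering of the wave speeds within a terrace together with the monotonicity of the lower endpoint $p_l$ with respect to the speed (Theorem~\ref{monotonicity}). From Proposition~\ref{uniqueness1} we already know that the set of platforms of $\mathcal{T}$ is contained in that of $\widehat{\mathcal{T}}$, so it only remains to rule out extra platforms of $\widehat{\mathcal{T}}$; equivalently, it suffices to show that $\widehat{\mathcal{T}}$ has no platform lying strictly between two consecutive platforms of $\mathcal{T}$. Throughout I denote by $p_l(p_u;c)$ the lower endpoint of the trajectory of \eqref{eq:trajectory3} issued from $p_u$ at speed $c$ as given by Proposition~\ref{prop:trajectory}, and I recall that the platforms of any terrace are stable steady states (established in Section~\ref{sec:exist}), so that each wave $\widehat{q}_{i-1}\to\widehat{q}_i$ genuinely corresponds to a trajectory of \eqref{eq:trajectory3} vanishing at both endpoints.

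First I would fix a pair of consecutive platforms $p_{j-1}>p_j$ of $\mathcal{T}$ and suppose, for contradiction, that $\widehat{\mathcal{T}}$ has at least one platform in the open interval $(p_j,p_{j-1})$. Writing the platforms of $\widehat{\mathcal{T}}$ contained in $[p_j,p_{j-1}]$ as $p_{j-1}=\widehat{q}_0>\widehat{q}_1>\cdots>\widehat{q}_s=p_j$ with $s\geq 2$, and denoting by $\beta_1\leq\cdots\leq\beta_s$ the corresponding speeds of the waves connecting $\widehat{q}_{i-1}$ to $\widehat{q}_i$ (non-decreasing, since $\widehat{\mathcal{T}}$ is a terrace), I would let $c:=c_j$ be the speed of the single $\mathcal{T}$-wave $\phi_j$ joining $p_{j-1}$ to $p_j$. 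By Proposition~\ref{prop:trajectory} its trajectory $q_{\phi_j}$ is exactly the solution of \eqref{eq:trajectory3} issued from $p_{j-1}$ at speed $c$, so that $p_l(p_{j-1};c)=p_j$ and $q_{\phi_j}<0$ throughout $(p_j,p_{j-1})$; in particular $q_{\phi_j}$ passes \emph{over} every $\widehat{q}_i$.

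The two key comparisons then pull the speeds in opposite directions. For the uppermost wave of $\widehat{\mathcal{T}}$ in this block, which connects $p_{j-1}$ to $\widehat{q}_1$ with $\widehat{q}_1>p_j$, I would use that $p_l(p_{j-1};\beta_1)=\widehat{q}_1>p_j=p_l(p_{j-1};c)$; since $p_l(p_{j-1};\cdot)$ is non-decreasing in the speed by Theorem~\ref{monotonicity}$(ii)$, this forces $\beta_1>c$. For the lowermost wave, which connects $\widehat{q}_{s-1}\in(p_j,p_{j-1})$ to $p_j$ at speed $\beta_s$, I would instead compare the trajectory issued from $\widehat{q}_{s-1}$ at the \emph{same} speed $c$ with $q_{\phi_j}$: both solve \eqref{eq:trajectory3} at speed $c$, and since the former starts on the axis at $(\widehat{q}_{s-1},0)$ while $q_{\phi_j}(\widehat{q}_{s-1})<0$, the uniqueness of solutions of \eqref{eq:trajectory3} in $\{q<0\}$ (the non-crossing property noted after Proposition~\ref{prop:trajectory}) keeps it strictly above $q_{\phi_j}$ until it meets the axis; as $q_{\phi_j}$ reaches $0$ only at $p_j$, this yields $p_l(\widehat{q}_{s-1};c)\geq p_j$. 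Since $p_l(\widehat{q}_{s-1};\beta_s)=p_j$, monotonicity in the speed now gives $\beta_s\leq c$. Combining the two estimates, $\beta_1>c\geq\beta_s$, which contradicts $\beta_1\leq\beta_s$.

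This contradiction shows that $\widehat{\mathcal{T}}$ has no platform strictly between consecutive platforms of $\mathcal{T}$, so the two sets of platforms coincide, i.e. $J=\widehat{J}$ and $p_j=\widehat{p}_j$ for all $j$. The delicate step is the lower estimate $\beta_s\leq c$: it is exactly here that one must convert the purely local fact that $\phi_j$ overshoots the intermediate platform $\widehat{q}_{s-1}$ into a genuine speed inequality, and the right tool is the same-speed non-crossing of trajectories, not the different-speed comparison of Theorem~\ref{monotonicity}. The upper estimate $\beta_1>c$ is the easier, symmetric half, and the whole argument hinges on the rigidity forced by requiring the speeds of $\widehat{\mathcal{T}}$ to be non-decreasing while $\mathcal{T}$ crosses the same range of values with a single, slower wave.
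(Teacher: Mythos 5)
Your strategy --- squeezing the speeds of the $\widehat{\mathcal{T}}$-waves inside a block $(p_j,p_{j-1})$ between $\beta_1>c_j$ and $\beta_s\le c_j$ --- is genuinely different from the paper's, which instead compares $\phi_1$ with the $\widehat{\mathcal{T}}$-wave landing at the \emph{same} lower platform $p_1$ and derives a contradiction from an integral identity on a small interval $(p_1+\delta,p_1+\epsilon)$ where $f<0$. The upper half of your argument ($\beta_1>c_j$) is sound. The problem is the lower half. To launch ``the trajectory issued from $\widehat{q}_{s-1}$ at speed $c$'' you must apply Proposition~\ref{prop:trajectory} with $p_u=\widehat{q}_{s-1}$, and that proposition requires $p_u$ to be a \emph{stable} steady state (its construction hinges on the jump of $f$ at $p_u$). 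You justify this by asserting that the platforms of any terrace are stable, ``established in Section~\ref{sec:exist}''; but Section~\ref{sec:exist} only proves this for the particular terrace $\mathcal{T}$ it constructs. For an arbitrary competing terrace $\widehat{\mathcal{T}}$, Definition~\ref{terracedef} only requires the limits to be steady states, and stability of its platforms is precisely one of the conclusions of the uniqueness theorem --- so invoking it here is circular. The paper's proof is deliberately arranged so that stability is only ever used at a platform of the \emph{constructed} terrace. Your gap is fixable: one can rule out an unstable intermediate platform $\theta_{2k+1}$ of $\widehat{\mathcal{T}}$ by integrating $\frac{d(q^2)}{dp}=-2cq-2f(p)$ from the platform into the wave above it when that wave's speed is $\le 0$ (there $f>0$ forces $q^2<0$), and into the wave below it when that wave's speed is $\ge 0$ (there $f<0$ and $-2cq\ge 0$ force $q^2<0$), the ordering of speeds guaranteeing that one of the two cases occurs. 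But this argument is not in your proposal and must be supplied.

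A second, smaller issue: from $p_l(\widehat{q}_{s-1};c)\ge p_j=p_l(\widehat{q}_{s-1};\beta_s)$ you conclude $\beta_s\le c$ ``by monotonicity'', but Theorem~\ref{monotonicity} only gives that $p_l$ is non-decreasing in the speed, not strictly increasing. If $p_l(\widehat{q}_{s-1};c)=p_j$ exactly, monotonicity alone does not exclude $\beta_s>c$; you would then have two connected waves monotonically joining the stable states $\widehat{q}_{s-1}$ and $p_j$ with different speeds, which must be ruled out by Lemma~\ref{speedunique} --- an ingredient you need to cite. With these two repairs your squeeze argument does go through and yields an alternative to the paper's integral computation near the common lower platform.
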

\begin{proof}
  	Let us prove that $p_1 = \widehat{p}_1$. According to Proposition~\ref{uniqueness1}, we know that there exists a positive integer $j$ such that $\widehat{p}_j = p_1$. Proceed by contradiction and assume that $j \geq 2$.

Again from Proposition~\ref{uniqueness1}, we also get that $\widehat{c}_1 > c_1$. Due to the ordering of the speeds of a propagating terrace, it follows that
\begin{equation}\label{eq:speedspeed}
\widehat{c}_j > c_1.
\end{equation}
Now, the trajectories $q_1$ and $\widehat{q}_j$ of the traveling waves $\phi_1$ and $\widehat{\phi}_j$ satisfy the following differential equations on $(p_1, \widehat{p}_{j-1})$:
	$$ {d q_1 \over dp} = -c_1 - {f(p) \over q_1},$$
	$$ {d\widehat{q}_j \over dp} = -\widehat{c}_j - {f(p) \over \widehat{q}_1},$$
together with
	$$q_1 ( p_1) = \widehat{q}_j (p_1) =  \widehat{q}_j (\widehat{p}_{j-1}) = 0 > q_1 (\widehat{p}_{j-1}).$$
	This latest inequality comes from the fact that $j \geq 2$ and $\widehat{p}_{j-1} \in (p_j,1) = (p_1,1)$. By substracting the two ODEs, we get
	$${d \over dp}(q_1 - \widehat{q}_j) = (\widehat{c}_j-c_i) - {f(p) \over q_1 \widehat{q}_j}(\widehat{q}_j - q_1).$$
	As a platform of the terrace $\mathcal{T}$ constructed in Section~\ref{sec:exist}, the steady state $p_1$ must be stable. Thus we can choose $\epsilon>0$ such that $f(p) < 0$ on $(p_1, p_1+\epsilon]$. We also choose $\delta \in (0, \epsilon)$, and define $p_\epsilon = p_1 +\epsilon$ and $p_\delta = p_1 + \delta$. Now, we integrate the above ODE from $p_\delta$ to $p_\epsilon$ and obtain that
\begin{equation}\label{eq:unique_almost}
(q_1(p_\epsilon)-\widehat{q}_j(p_\epsilon)) - (q_1(p_\delta)-\widehat{q}_j(p_\delta)) = \int_{p_\delta}^{p_\epsilon}(\widehat{c}_j-c_i) dp - \int_{p_\delta}^{p_\epsilon}{f(p) \over q_1 \widehat{q}_j}(\widehat{q}_j - q_1) dp.
\end{equation}
Recall that $\phi_1$ connects $p_0 =1$ and $p_1$, while $\widehat{\phi}_j$ connects $\widehat{p}_{j-1} \in (p_1, 1)$ and $\widehat{p}_j = p_1$. Therefore it follows from Lemma~\ref{rewrite2} that
\begin{equation}\label{eq:unique_almost_b}
q_1 < \widehat{q}_j  \ \ \mbox{ in the interval } \ \ (p_1, \widehat{q}_{j-1}).
\end{equation}
In particular, we have that
$$K_\epsilon := q_1(p_\epsilon)-\widehat{q}_j(p_\epsilon) < 0 ,$$
Since $p_\delta \to p_1$ as $\delta \to 0$, we also have $q_1(p_\delta)-\widehat{q}_j(p_\delta) \to 0$ as $\delta \to 0$. Thus, we can choose $\delta >0$ such that $q_1(p_\delta)-\widehat{q}_j(p_\delta) > K_\epsilon$. So, for such $\epsilon$ and $\delta$, we get that
$$ (q_1(p_\epsilon)-\widehat{q}_j(p_\epsilon)) - (q_1(p_\delta)-\widehat{q}_j(p_\delta)) < 0,$$
i.e. the left hand side of \eqref{eq:unique_almost} is negative.

On the other hand, for the right hand side, we have by \eqref{eq:speedspeed} that
 $\int_{p_\delta}^{p_\epsilon}(\widehat{c}_j-c_i) dp > 0$. Using again \eqref{eq:unique_almost_b}, we also have $\widehat{q}_j - q_1 \ge 0$. Since $q_1 \widehat{q}_j >0 $ and $f(p)<0$ on $[p_\delta, p_\epsilon]$, we get that $\int_{p_\delta}^{p_\epsilon}{f(p) \over q_1 \widehat{q}_j}(\widehat{q}_j - q_1) dp \le 0$, and the right hand side of \eqref{eq:unique_almost} is positive.

We have found a contradiction. We conclude that $j=1$ and $\widehat{p}_j = p_{1}$ and, by iteration, one eventually finds that $\mathcal{T}$ and $\widehat{\mathcal{T}}$ have the same platforms.
\end{proof}
We now know that both terraces have the same platforms. It immediatley follows from Lemma~\ref{speedunique} that, for each integer~$j$, the traveling waves $\phi_j$ and $\widehat{\phi}_j$ coincide up to some shift. Putting this uniqueness property together with the results of Section~\ref{sec:exist}, this ends the proof of Theorem~\ref{th:exist}.\\

\section*{}

\noindent{\bf Acknowledgements}

\noindent{This work was carried out in the framework of the CNRS International Research Network ``ReaDiNet''. The three authors were also supported by the joint PHC Star project MAP, funded by the French Ministry for Europe and Foreign Affairs and the National Research Fundation of Korea. The first author also acknowledges support from ANR via the project Indyana under grant agreement ANR- 21- CE40-0008.}

 \end{document}